\newcommand{\BlackBoxes}{\global\overfullrule5pt}
\newcommand{\weakD}{\stackrel{\lower0.2ex\hbox{$\scriptscriptstyle
             \mathbf{D}[0,1] $}}{\Rightarrow}}
\def\@serieslogo{%
\vbox to\headheight{%
\parindent\z@ \fontsize{6}{7\p@}\selectfont
November 9, 2012\endgraf\
\vss}}}
\def\P{\mathbb{P}}
\def\R{\mathbb{R}}
\def\Prob{\mathbf{P}}
\def\E{\mathbf{E}}
\newcommand{\oF}{\overline{F}}
\newcommand{\bx}{\textbf{x}}
\theoremstyle{theorem}
\newtheorem{theorem}{Theorem}[section]
\newtheorem{proposition}[theorem]{Proposition}
\theoremstyle{definition}
\newtheorem{remark}[theorem]{Remark}
\newtheorem{alg}[theorem]{Algorithm}
\newcommand{\pmc}{\widehat{p}}
\newcommand{\pis}{\widehat{p}}
\newcommand{\by}{\textbf{y}}
\newcommand{\bone}{\mathbf{1}}
\begin{document}

\title[MCMC for heavy-tailed random walk]{Markov chain Monte Carlo for computing rare-event probabilities for a heavy-tailed random walk}


\author[T.~Gudmundsson]{Thorbj\"orn Gudmundsson}
\address{Department of Mathematics, KTH, 100 44
Stockholm, Sweden}
\email{{tgud@kth.se, hult@kth.se}}
\author[H.~Hult]{Henrik Hult$^{\dagger}$}

\copyrightinfo{}

\thanks{$^{\dagger}$ Henrik Hult's research was supported by the G\"oran Gustafsson Foundation}

\begin{abstract}
In this paper a method based on a Markov chain Monte Carlo (MCMC) algorithm is proposed to compute the probability of a rare event. The conditional distribution of the underlying process given that the rare event occurs has the probability of the rare event as its normalizing constant. Using the MCMC methodology a Markov chain is simulated,
with that conditional distribution as its invariant distribution, and information about the normalizing constant is extracted from its trajectory. The algorithm is described in full generality and applied to the problem of computing the probability that a heavy-tailed random walk exceeds a high threshold. An unbiased estimator of the reciprocal probability is constructed whose normalized variance vanishes asymptotically. The algorithm is extended to random sums and its performance is illustrated numerically and compared to existing importance sampling algorithms.
\end{abstract}


\maketitle


\noindent {\small
\emph{Keywords:} Markov chain Monte Carlo; heavy tails; rare-event simulation; random walk
\\
\emph{Mathematics Subject Classification
(2010):} 65C05; 60J22 (primary); 60G50 (secondary)
}

\section{Introduction}
\label{sec:intro}

In this paper a Markov chain Monte Carlo (MCMC) methodology is proposed for computing the probability of a rare event. The basic idea is to use an MCMC algorithm to sample from the conditional distribution given the event of interest and then extract the probability of the event as the normalizing constant. The methodology will be outlined in full generality and exemplified in the setting of computing hitting probabilities for a heavy-tailed random walk.

A rare-event simulation problem can be often be formulated as follows. Consider a sequence of random variables $X^{(1)}, X^{(2)}, \dots,$ each of which can be sampled repeatedly by a simulation algorithm.  The objective is to estimate $p^{(n)} = \Prob(X^{(n)} \in A)$, for some large $n$, based on a sample $X^{(n)}_{0}, \dots, X^{(n)}_{T-1}$. It is assumed that the probability $\Prob(X^{(n)} \in A) \to 0$, as $n\to \infty$, so that the event $\{X^{(n)} \in A\}$ can be thought of as rare.  The solution to the problem consists of finding a family of simulation algorithms and corresponding estimators  whose performance is satisfactory for all $n$. For unbiased estimators $\widehat p^{(n)}_{T}$ of $p^{(n)}$ a useful performance measure is the relative error:
\begin{align*}
	\text{RE}^{(n)} = \frac{\Var(\widehat{p}_{T}^{(n)})}{(p^{(n)})^{2}}.
\end{align*}
An algorithm is said to have \emph{vanishing relative error} if the relative error tends to zero as $n \rightarrow \infty$ and \emph{bounded relative error} if the relative error is bounded in $n$.

It is well known that the standard Monte Carlo algorithm is inefficient for computing rare-event probabilities. As an illustration, consider the standard Monte Carlo estimate
\begin{equation*}
\pmc_{T}^{(n)} = \frac{1}{T}\sum_{t=0}^{T-1} I\{ X_t^{(n)} \in A \},
\end{equation*}
of $p^{(n)} = \Prob(X^{(n)} \in A)$ based on independent replicates $X_{0}^{(n)}, \dots, X_{T-1}^{(n)}$. The relative error of the Monte Carlo estimator is
$$ \frac{\Var(\pmc^{(n)}_{T})}{(p^{(n)})^2} = \frac{p^{(n)}(1-p^{(n)})}{T(p^{(n)})^2} = \frac{1}{Tp^{(n)}} - \frac{1}{T} \to \infty,$$
as $n \to \infty$, indicating that the performance deteriorates when the event is rare.

A popular method to reduce the computational cost is importance sampling, see e.g.\ \cite{Asmussen2007}. In importance sampling the random variables  $X_{0}^{(n)}, \dots, X_{T-1}^{(n)}$ are sampled independently from a different distribution, say $G^{(n)}$, instead of the original distribution $F^{(n)}$. The importance sampling estimator is defined as a weighted empirical estimator,
\begin{equation*}
\pis^{(n)}_{T} = \frac{1}{T} \sum_{t=0}^{T-1} L^{(n)}(X^{(n)}_{t}) I\{ X^{(n)}_{t} \in A \},
\end{equation*}
where $L^{(n)} = dF^{(n)}/dG^{(n)}$ is the likelihood ratio, which is assumed to exist on $A$. The importance sampling estimator $\pis_{T}^{(n)}$ is unbiased and its performance depends on the choice of the sampling distribution $G^{(n)}$.
The optimal sampling distribution is called the zero-variance distribution and is simply
the conditional distribution,
\begin{equation*}
F^{(n)}_A(\cdot) = \Prob(X^{(n)} \in \cdot \mid  X^{(n)} \in A) = \frac{\Prob( X^{(n)} \in \cdot  \, \cap A )}{p^{(n)}} \text{.}
\end{equation*}
In this case the likelihood ratio weights $L^{(n)}$ are equal to $p^{(n)}$ which implies that $\pis^{(n)}_{T}$ has zero variance. Clearly, the zero-variance distribution cannot be implemented in practice, because $p^{(n)}$ is unknown, but it serves as a starting point for selecting the sampling distribution.  A good  idea is to choose a sampling distribution $G^{(n)}$ that approximates the zero-variance distribution and such that the random variable $X^{(n)}$ can easily be sampled from $G^{(n)}$, the event $\{X^{(n)} \in A \}$ is more likely under the sampling distribution $G^{(n)}$ than under the original $F^{(n)}$, and the likelihood ratio $L^{(n)}$ is unlikely to become too large. Proving efficiency (e.g.\ bounded relative error) of an importance sampling algorithm can be technically cumbersome and often requires extensive analysis.

The methodology proposed in this paper is also based on the conditional distribution $F^{(n)}_{A}$. Because $F^{(n)}_{A}$ is known up to the normalizing constant $p^{(n)}$ it is possible to sample from $F^{(n)}_{A}$ using an MCMC algorithm such as a Gibbs sampler or Metropolis-Hastings algorithm. The idea is to generate samples $X_{0}^{(n)}, \dots, X_{T-1}^{(n)}$ from a Markov chain with stationary distribution $F^{(n)}_{A}$ and construct an estimator of the normalizing constant $p^{(n)}$. An unbiased estimator of $(p^{(n)})^{-1}$ is constructed from a known probability density $v^{(n)}$ on $A$, which is part of the design, and the original density $f^{(n)}$ of $X^{(n)}$ by
\begin{align}\label{intro:phat}
	\widehat{q}^{(n)}_{T} = \frac{1}{T} \sum_{t=0}^{T-1} \frac{v^{(n)}(X_{t}^{(n)})I\{X_{t}^{(n)} \in A\}}{f^{(n)}(X_{t}^{(n)})}.
\end{align}
The performance of the estimator depends both on the choice of the density $v^{(n)}$ and on the ergodic properties of the MCMC sampler used in the implementation. Roughly speaking the rare-event properties, as $n \to \infty$, are controlled by the choice of $v^{(n)}$ and the large sample properties, as $T \to \infty$, are controlled by the ergodic properties of the MCMC sampler.

The computation of normalizing constants and ratios of normalizing constants in the context of MCMC is a reasonably well studied problem in the statistical literature, see e.g.\  \cite{GM98} and the references therein. However, such methods have, to the best of our knowledge, not been studied in the context of rare-event simulation.

To exemplify the MCMC methodology we consider the problem of computing the probability that a random walk $S_{n} = Y_{1} + \dots + Y_{n}$, where $Y_{1}, \dots, Y_{n}$ are nonnegative, independent, and heavy-tailed random variables,  exceeds a high threshold $a_{n}$. This problem has received some attention in the context of conditional Monte Carlo algorithms \cite{Asmussen1997,Asmussen2006} and importance sampling algorithms \cite{Juneja2002,Dupuis2007,BlanchetLiu2008,BL10}.

In this paper a Gibbs sampler is presented for sampling from the conditional distribution $\Prob((Y_{1}, \dots, Y_{n}) \in \cdot \mid S_{n} > a_{n})$. The resulting Markov chain is proved to be uniformly ergodic. An estimator for $(p^{(n)})^{-1}$ of the form \eqref{intro:phat} is suggested with $v^{(n)}$ as the conditional density of $(Y_{1}, \dots, Y_{n})$ given $\max\{Y_{1}, \dots, Y_{n}\} > a_{n}$. The estimator is proved to have vanishing normalized variance when the distribution of $Y_{1}$ belongs to the class of subexponential distributions. The proof is elementary and is completed in a few lines. This is in sharp contrast to efficiency proofs for importance sampling algorithms for the same problem, which require more restrictive assumptions on the tail of $Y_{1}$ and tend to be long and technical \cite{Dupuis2007,BlanchetLiu2008,BL10}. An extension of the algorithm to a sum with a random number of steps is also presented.

Here follows an outline of the paper.
The basic methodology and a heuristic efficiency analysis for computing rare-event probabilities is described in Section \ref{sec:mcmc:prob}. The general formulation for computing expectations is given in Section \ref{sec:mcmc:exp} along with a precise formulation of the efficiency criteria. Section \ref{sec:htrw} contains the design and efficiency results for the estimator for computing hitting probabilities for a heavy-tailed random walk, with deterministic and random number of steps. Section \ref{sec:numeric} presents numerical experiments and compares the efficiency of the MCMC estimator against an existing importance sampling algorithm and standard Monte Carlo. The  MCMC estimator has strikingly better performance than existing importance sampling algorithms.

\section{Computing rare-event probabilities by Markov chain Monte Carlo}
\label{sec:mcmc:prob}

In this section an algorithm for computing rare-event probabilities using Markov chain Monte Carlo (MCMC) is presented and conditions that ensure good convergence are discussed in a heuristic fashion. A more general version of the algorithm, for computing expectations, is provided in Section \ref{sec:mcmc:exp} along with a precise asymptotic efficiency criteria.

\subsection{Formulation of the algorithm}

Let $X$ be a real-valued random variable with distribution $F$ and density $f$ with respect to the Lebesgue measure. The problem is to compute the probability
\begin{equation} \label{obj:prob}
p = \Prob( X \in A) = \int_A dF \text{.}
\end{equation}
The event $\{X \in A\}$ is thought of as rare in the sense that $p$ is  small. Let $F_{A}$ be the conditional distribution of $X$ given $X \in A$. The density of $F_{A}$ is given by
\begin{equation} \label{eq:prob:density}
	\frac{dF_{A}}{dx}(x) = \frac{f(x) I\{ x \in A \}}{p}.
\end{equation}
Consider a Markov chain $(X_t)_{t\geq 0}$ whose invariant density is given by \eqref{eq:prob:density}.  Such a Markov chain can be constructed by implementing an MCMC algorithm such as a Gibbs sampler or a Metropolis-Hastings algorithm, see e.g. \cite{Asmussen2007,Gilks1996}.

To construct an estimator for the normalizing constant $p$, consider a non-negative function $v$, which is normalized in the sense that $\int_{A} v(x) dx =1$.  The function $v$ will be chosen later as part of the design of the estimator. For any choice of $v$ the sample mean,  $$\frac{1}{T} \sum_{t=0}^{T-1} \frac{v(X_{t})I\{X_{t} \in A\}}{f(X_{t})},$$ can be viewed as an estimate of
$$ \E_{F_A}\left[ \frac{v(X)I\{X \in A\}}{f(X)} \right] = \int_{A} \frac{v(x)}{f(x)} \frac{f(x)}{p} dx  = \frac{1}{p} \int_{A} v(x)dx = \frac{1}{p}. $$
Thus,
\begin{equation}
\widehat{q}_{T}= \frac{1}{T} \sum_{t=0}^{T-1} u(X_{t}),  \quad \text{where} \quad u(X_{t}) = \frac{v(X_{t})I\{X_{t} \in A\}}{f(X_{t})}, \label{eq:hatpn}
\end{equation}
is an unbiased estimator of $q = p^{-1}$. Then $\widehat{p}_{T} = \widehat{q}_{T}^{-1}$ is an estimator of $p$.

The expected value above is computed under the invariant distribution $F_{A}$ of the Markov chain. It is implicitly assumed that the sample size $T$ is  sufficiently large that the burn-in period, the time until the Markov chain reaches stationarity, is negligible or alternatively that the burn-in period is discarded. Another remark is that it is theoretically possible that all the terms in the sum in \eqref{eq:hatpn} are zero, leading to the estimate $\widehat{q}_{T} = 0$ and then $\widehat p_{T} = \infty$. To avoid such nonsense one can simply take $\widehat{p}_{T}$ as the minimum of  $\widehat{q}_{T}^{-1}$ and one.

There are two essential design choices that determine the performance of the algorithm: the  choice of the function $v$ and the design of the MCMC sampler. The function $v$ influences the variance of $u(X_{t})$ in \eqref{eq:hatpn} and is therefore of main concern for controlling the rare-event properties of the algorithm. It is desirable to take $v$ such that the normalized variance of the estimator, given by $p^{2}\Var(\widehat{q}_{T})$, is not too large.
The design of the MCMC sampler, on the other hand, is  crucial to control the dependence of the Markov chain and thereby the convergence rate of the algorithm as a function of the sample size. To speed up simulation it is desirable that the Markov chain mixes fast so that the dependence dies out quickly.

\subsection{Controlling the normalized variance}

This section contains a discussion on how to control the performance of the estimator $\widehat q_{T}$ by controlling its normalized variance.

For the estimator $\widehat q_{T}$ to be useful it is of course important that its variance is not too large. When the probability $p$ to be estimated is small it is reasonable to ask that $\Var(\widehat{q}_{T})$ is of size comparable to $q^{2} = p^{-2}$, or equivalently, that the standard deviation of the estimator is roughly of the same size as $p^{-1}$. To this end the normalized variance $p^{2} \Var(\widehat q_{T})$ is studied. 

Let us consider $\Var(\widehat q_{T})$.
With
\begin{align*}
u(x) = \frac{v(x)I\{x \in A\}}{f(x)},
\end{align*}
it follows that
\begin{align}
p^{2}\Var_{F_A} ( \widehat q_{T} ) & = p^{2} \Var_{F_A} \Big(\frac{1}{T} \sum_{t=0}^{T-1} u(X_t) \Big)  \nonumber \\
& = p^2 \Big( \frac{1}{T}\Var_{F_A}( u(X_{0})) + \frac{2}{T^2}\sum_{t=0}^{T-1} \sum_{s=t+1}^{T-1} \Cov_{F_A}(u(X_s),u(X_t)) \Big).\label{eq:prob:variance}
\end{align}
Let us for the moment focus our attention on the first term. It can be written as
\begin{eqnarray*}
\frac{p^2}{T} \Var_{F_A} \big( u(X_{0})\big) & = & \frac{p^2}{T}\Big( \E_{F_A} \big[ u(X_{0})^2 \big] - \E_{F_A} \big[u(X_{0})\big]^2 \Big) \\
& = & \frac{p^2}{T} \Big( \int \Big( \frac{v(x)}{f(x)} I\{x \in A\}\Big)^2 F_A(dx) - \frac{1}{p^2}\Big) \\
& = & \frac{p^2}{T} \Big( \int \frac{v^2(x)}{f^2(x)} I\{x \in A\} \frac{f(x)}{p} dx - \frac{1}{p^2}\Big) \\
& = & \frac{1}{T} \Big( \int_A \frac{v^2(x)p}{f(x)} dx - 1 \Big) \text{.}
\end{eqnarray*}

Therefore, in order to control the normalized variance  the function $v$ must be chosen so that
$\int_A \frac{v^2(x)}{f(x)} dx$ is close to $p^{-1}$. An important observation is that the conditional density \eqref{eq:prob:density} plays a key role in finding a good choice of $v$.
Letting $v$ be the  conditional density in \eqref{eq:prob:density} leads to
$$ \int_A \frac{v^2(x)}{f(x)} dx = \int_A \frac{f^2(x)I\{x\in A\} }{p^2f(x)} dx = \frac{1}{p^{2}} \int_A f(x) dx = \frac{1}{p} \text{,} $$
which implies,
$$ \frac{p^2}{T} \Var_{F_A} \big( u(X)\big) = 0 \text{.} $$
This motivates taking $v$ as an approximation of the conditional density \eqref{eq:prob:density}.

If for some set $B \subset A$ the probability $\Prob(X \in B)$ can be computed explicitly, then a candidate for $v$ is
\begin{align*}
	v(x) = \frac{f(x) I\{x \in B\}}{\Prob(X \in B)};
\end{align*}
the conditional density of $X$ given $X \in B$. This candidate is likely to perform well if $\Prob(X\in B)$ is good approximation of $p$. Indeed, in this case
$$ \int_A \frac{v^2(x)}{f(x)} dx = \int_A \frac{f^2(x)I\{x\in B\} }{\Prob(X\in B)^{2} f(x) } dx = \frac{1}{\Prob(X\in B)^{2}} \int_B f(x) dx = \frac{1}{\Prob(X\in B)} \text{,} $$
which will be close to $p^{-1}$.

Now, let us shift emphasis to the covariance term in \eqref{eq:prob:variance}.
As the samples $(X_{t})_{t=0}^{T-1}$ form a Markov chain the $X_{t}$'s are dependent. Therefore the covariance term in \eqref{eq:prob:variance} is non-zero and may not be ignored. The crude upper bound
\begin{align*}
	\Cov_{F_A}(u(X_s),u(X_t)) \leq \Var_{F_{A}}(u(X_{0})),
\end{align*}
leads to the upper bound
\begin{align*}
	\frac{2 p^2}{T^2}\sum_{t=0}^{T-1} \sum_{s=t+1}^{T-1} \Cov_{F_A}(u(X_s),u(X_t)) \leq p^{2} \Big(1-\frac{1}{T}\Big) \Var_{F_{A}}(u(X_{0}))
\end{align*}
for the covariance term. This is a very crude upper bound as it does not decay to zero as $T \to \infty$. But, at the moment, the emphasis is on small $p$ so we will proceed with this upper bound anyway. As indicated above the choice of $v$ controls the term $p^{2}\Var_{F_{A}}(u(X_{0}))$. We conclude that the normalized variance \eqref{eq:prob:variance} of the estimator  $\widehat{q}_{T}$ is controlled by the choice of $v$ when $p$ is small.


\subsection{Ergodic properties}
As we have just seen the choice of the function $v$ controls the normalized variance of the estimator  for small $p$. The design of the MCMC sampler, on the other hand, determines the strength of the dependence in the Markov chain. Strong dependence implies slow convergence  which results in a high computational cost. The convergence rate of MCMC samplers can be analyzed within the theory of $\varphi$-irreducible Markov chains. Fundamental results for $\varphi$-irreducible Markov chains are given in  \cite{Meyn1993, Nummelin1984}. We will focus on conditions that imply a geometric convergence rate. The conditions given below are well studied in the context of MCMC samplers. Conditions for geometric ergodicity in the context of Gibbs samplers have been studied by e.g.\  \cite{Chan1993, Smith1992, Tierney1994}, and for Metropolis-Hastings algorithms by \cite{Mengersen1996}.

A Markov chain $(X_{t})_{t\geq 0}$ with transition kernel $p(x,\cdot) = \Prob(X_{t+1} \in \cdot \mid X_{t} = x)$ is $\varphi$-irreducible if there exists a measure $\varphi$ such that $\sum_{t} p^{(t)}(x,\cdot) \ll \varphi(\cdot)$, where $p^{(t)}(x, \cdot) =  \Prob(X_{t} \in \cdot \mid X_{0} = x)$ denotes the $t$-step transition kernel and $\ll$ denotes absolute continuity.
A Markov chain with invariant distribution $\pi$ it is called geometrically ergodic if there exists a positive function $M$ and a constant $r \in (0,1)$ such that
\begin{align}\label{eq:geomerg}
	\| p^{(t)}(x,\cdot) - \pi(\cdot)\|_{\text{TV}} \leq M(x) r^{t},
\end{align}
where $\|\cdot \|_{\text{TV}}$ denotes the total-variation norm.
This condition ensures that the distribution of the Markov chain converges at a geometric rate to the invariant distribution. If the function $M$ is bounded, then the Markov chain is said to be uniformly ergodic. Conditions such as \eqref{eq:geomerg} may be difficult to establish directly and are therefore substituted by suitable minorization or drift conditions. A minorization condition holds on a set $C$ if there exist a probability measure $\nu$, a positive integer $t_{0}$, and $\delta  > 0$ such that
\begin{align*}
	p^{(t_{0})}(x,B) \geq \delta \nu(B),
\end{align*}
for all $x \in C$ and Borel sets $B$. In this case $C$ is said to be a small set.
Minorization conditions have been used for obtaining rigorous bounds on the convergence of MCMC samplers, see e.g.\ \cite{Rosenthal1995}.

If the entire state space is small, then the Markov chain is uniformly ergodic.
Uniform ergodicity does typically not hold for Metropolis samplers,  \cite{Mengersen1996} Theorem 3.1. Therefore useful sufficient conditions for geometric ergodicity are often given in the form of drift conditions \cite{Chan1993, Mengersen1996}. Drift conditions are also useful for establishing central limit theorems for MCMC algorithms, see \cite{Jones2004} and the references therein. When studying simulation algorithms for random walks, in Section \ref{sec:htrw}, we will encounter Gibbs samplers that are uniformly ergodic.

 \subsection{Heuristic efficiency criteria}
To summarize, the heuristic arguments given above lead to the following desired properties of the estimator.
\begin{enumerate}
\item \emph{Rare event efficiency:} Construct an unbiased estimator $\widehat q_{T}$ of $p^{-1}$ according to \eqref{eq:hatpn} by finding a function $v$ which approximates the conditional density \eqref{eq:prob:density}. The choice of $v$ controls the normalized variance of the estimator.
\item \emph{Large sample efficiency:} Design the MCMC sampler, by finding an appropriate Gibbs sampler or a proposal density in the Metropolis-Hastings algorithm, such that the resulting Markov chain is geometrically ergodic.
\end{enumerate}

\section{The general formulation of the algorithm}
\label{sec:mcmc:exp}

In the previous section an estimator, based on Markov chain Monte Carlo, was introduced for computing the probability of a rare event.  In this section the same ideas are applied to the problem of computing an expectation. Here the setting is somewhat more general. For instance, there is no assumption that densities with respect to Lebesgue measure exist.

Let $X$ be a random variable with distribution $F$ and $h$ be a non-negative $F$-integrable function. The problem is to compute the expectation
\begin{equation*} 
\theta = \E\big[ h(X) \big] = \int h(x) dF(x) \text{.}
\end{equation*}
In the special case when $F$ has density $f$ and $h(x) = I\{x \in A\}$  this problem reduces to computing the probability in \eqref{obj:prob}.

The analogue of the conditional distribution in \eqref{eq:prob:density} is the distribution $F_{h}$ given by
\begin{equation*}
F_h(B)  = \frac{1}{\theta} \int_{B} h(x)dF(x), \quad \text{ for measurable sets $B$.}
\end{equation*}

Consider a Markov chain $(X_t)_{t \geq 0}$  having $F_h$ as its invariant distribution.
To define an estimator of $\theta^{-1}$, consider a probability distribution $V$ with $V \ll F_{h}$. Then it follows that $V \ll F$ and it is assumed that the density $dV/dF$ is known. Consider the estimator of $\zeta = \theta^{-1}$ given by
\begin{equation}
\widehat \zeta_{T} =  \frac{1}{T} \sum_{t=0}^{T-1} u(X_t) \label{eq:hatthetan}
\end{equation}
where
\begin{equation*} 
u(x) = \frac{1}{\theta} \frac{dV}{dF_h}(x).
\end{equation*}
Note that $u$ does not depend on $\theta$ because $V \ll F_{h}$ and therefore
\begin{equation*}
u(x) = \frac{1}{\theta} \frac{dV}{dF_h}(x) = \frac{1}{h(x)}\frac{dV}{dF}(x),
\end{equation*}
for $x$ such that $h(x) > 0$. The estimator \eqref{eq:hatthetan} is a generalization of the estimator \eqref{eq:hatpn} where one can think of $v$ as the  density of $V$ with respect to Lebesgue measure. An estimator of $\theta$ can then constructed as $\widehat{\theta}_{T} = \widehat{\zeta}_{T}^{-1}$.

The  variance analysis of $\widehat \zeta_{T}$ follows precisely the steps outlined above in Section \ref{sec:mcmc:prob}.
The normalized variance is 
\begin{equation} \label{eq:exp:variance}
\theta^{2} \Var_{F_h} (\widehat \zeta_{T}) = \frac{\theta^2}{T} \Var_{F_{h}} \big( u(X_{0}) \big) + \frac{2\theta^2}{T^2} \sum_{t=0}^{T-1} \sum_{s=t+1}^{T-1} \Cov_{F_{h}} \big( u(X_s), u(X_t) \big) \text{,}
\end{equation}
where the first term can be rewritten as
\begin{eqnarray*}
\frac{\theta^2}{T} \Var_{F_h} \big( u(X_{0}) \big) & = & \frac{\theta^2}{T} \Big( \E_{F_h}\big[ u(X_{0})^2 \big] - \E_{F_h} \big[u(X_{0})\big]^2 \Big) \\
& = & \frac{\theta^2}{T} \Big( \int \Big( \frac{1}{\theta} \frac{dV}{dF_h}(x) \Big)^2 F_h(dx) - \frac{1}{\theta^2} \Big) \\
& = & \frac{1}{T} \Big( \int \frac{dV}{dF_h}(x) V(dx) - 1 \Big) \\
& = & \frac{1}{T} \Big( \E_V\Big[ \frac{dV}{d F_h} \Big] - 1 \Big).
\end{eqnarray*}
The analysis above indicates that an appropriate choice of $V$ is such that
$\E_{V} [ \frac{dV}{d F_h}]$
is close to $1$. Again,  the ideal choice would be taking $V = F_{h}$ leading to zero variance. This choice is not feasible but nevertheless suggests selecting $V$ as an approximation of $F_{h}$.
The crude upper bound  for the covariance term in \eqref{eq:exp:variance} 
is valid, just as in Section \ref{sec:mcmc:prob}.

\subsection{Asymptotic efficiency criteria}
\label{sec:as.eff}

Asymptotic efficiency can be conveniently  formulated in terms of a limit criteria as a large deviation parameter tends to infinity. As is customary in problems related to rare-event simulation the problem at hand is embedded in a sequence of problems, indexed by $n = 1,2,\dots$. The general setup is formalized as follows.

Let $(X^{(n)})_{n \geq 1}$ be a sequence of random variable with $X^{(n)}$ having distribution $F^{(n)}$.  Let $h$ be a non-negative function, integrable with respect to $F^{(n)}$, for each $n$. Suppose
\begin{equation*}
\theta^{(n)} = \E\big[ h(X^{(n)}) \big] = \int h(x) dF^{(n)}(x) \to 0,
\end{equation*}
as $n\to \infty$. The problem is to compute $\theta^{(n)}$ for some large $n$.

Denote by $F_{h}^{(n)}$ the distribution with $dF_{h}^{(n)}/dF^{(n)} = h/\theta^{(n)}$. For the $n$th problem, a Markov chain $(X_{t}^{(n)})_{t=0}^{T-1}$  with invariant distribution $F_{h}^{(n)}$ is generated by an MCMC algorithm. The estimator of $\zeta^{(n)} = (\theta^{(n)})^{-1}$ is based on a probability distribution $V^{(n)}$, such that $V^{(n)}\ll F_{h}^{(n)}$, with known density with respect to $F^{(n)}$. An estimator $\widehat{\zeta}^{(n)}_{T}$ of $\zeta$ is given by
\begin{equation*}
	\widehat \zeta^{(n)}_{T} = \frac{1}{T} \sum_{t=0}^{T-1} u^{(n)}(X_t^{(n)}),
\end{equation*}
where
\begin{equation*}
u^{(n)}(x) = \frac{1}{h(x)} \frac{dV^{(n)}}{dF^{(n)}}(x).
\end{equation*}


The heuristic efficiency criteria in Sections \ref{sec:mcmc:prob} can now be rigorously formulated as follows:
\begin{enumerate}
\item \emph{Rare-event efficiency:} Select the probability distributions $V^{(n)}$ such that
\begin{align*}
	(\theta^{(n)})^{2}\Var_{F^{(n)}_{h}}(u^{(n)}(X)) \to 0, \text{ as } n\to \infty.
\end{align*}
\item \emph{Large sample size efficiency:} Design the MCMC sampler, by finding an appropriate Gibbs sampler or a proposal density for the Metropolis-Hastings algorithm, such that, for each $n \geq 1$, the Markov chain $(X_{t}^{(n)})_{t\geq 0}$ is geometrically ergodic.
\end{enumerate}


\begin{remark}
	The rare-event efficiency criteria is formulated in terms of the efficiency of estimating $(\theta^{(n)})^{-1}$ by $\widehat{\zeta}_{T}^{(n)}$. If one insists on studying the mean and variance of $\widehat{\theta}^{(n)}_{T} = (\widehat{\zeta}^{(n)}_{T})^{-1}$, then the effects of the transformation $x \mapsto x^{-1}$ must be taken into account. For instance, the estimator $\widehat{\theta}^{(n)}_{T}$ is biased and its variance could be infinite.
\end{remark}

\section{A random walk with heavy-tailed steps}
\label{sec:htrw}

In this section the estimator introduced in Section \ref{sec:mcmc:prob} is applied to compute the probability that a random walk with heavy-tailed steps exceeds a high threshold.

Let $Y_1,\ldots,Y_n$ be independent and identically distributed random variables with common distribution $F_Y$ and density $f_Y$ with respect to Lebesgue measure. Consider the random walk $S_n=Y_1+\cdots+Y_n$ and the problem of computing the probability
$$ p^{(n)} = \Prob(S_n > a_n) \text{,} $$
where $a_n \to \infty $ sufficiently fast that $p^{(n)} \rightarrow 0$ as $n \rightarrow \infty$.

It is convenient to denote by $\Ybold^{(n)}$ the $n$-dimensional random vector
\begin{equation*}
\Ybold^{(n)} = (Y_1, \ldots , Y_n)^{\trans} \text{,}
\end{equation*}
and the set
\begin{align*}
 	A_{n} = \{\by \in \R^{n}:  \bone^{\trans}\by > a_n \},
\end{align*}
where $\bone= (1,\dots, 1)^{\trans}  \in \R^{n}$ and $\by = (y_1,\ldots,y_n)^{\trans}$. With this notation
$$ p^{(n)} = \Prob(S_n > a_n)  = \Prob(\bone^{\trans} \Ybold^{(n)} > a_{n}) = \Prob(\Ybold^{(n)} \in A_{n}) \text{.} $$
The conditional distribution
\begin{equation*}
F_{A_{n}}^{(n)} (\cdot) = \Prob( \Ybold^{(n)} \in \cdot \mid \Ybold^{(n)} \in A_{n} ),
\end{equation*}
has density
\begin{eqnarray} \label{eq:rw:density}
\frac{dF^{(n)}_{A_{n}}}{dx} (y_{1}, \dots,y_{n})
& = & \frac{\prod_{j = 1}^n f_Y(y_j) I\{ y_{1}+\dots+y_{n} > a_n \}}{p^{(n)}}  \text{.}
\end{eqnarray}

The first step towards defining the estimator of $p^{(n)}$ is to construct the Markov chain $(\Ybold_t^{(n)})_{t\geq 0}$ whose invariant density is given by  \eqref{eq:rw:density} using a Gibbs sampler. In short, the Gibbs sampler updates one element of $\Ybold_{t}^{(n)}$ at a time keeping the other elements  constant. Formally the algorithm proceeds as follows.

\begin{alg} \label{alg:rw}
Start at an initial state $\Ybold_0^{(n)} = (Y_{0,1}, \dots, Y_{0,n})$ where $Y_{0,1} + \dots + Y_{0,n} > a_n$. Given $\Ybold_t^{(n)} = (Y_{t,1}, \dots, Y_{t,n})$, for some $t= 0,1,\ldots$, the next state $\Ybold_{t+1}^{(n)}$ is sampled as follows:
\begin{enumerate}
\item Draw  $j_{1}, \dots, j_{n}$  from $\{1,\dots,n\}$ without replacement and proceed by updating the components of $\Ybold_{t}^{(n)}$ in the order thus obtained.
\item For each $k=1,\dots,n$, repeat the following.
\begin{enumerate}
\item Let $j = j_{k}$ be the index to be updated and write $$\Ybold_{t,-j} = (Y_{t,1}, \dots, Y_{t,j-1},Y_{t,j+1},\dots,Y_{t,n}).$$ Sample $Y_{t,j}'$ from the conditional distribution of $Y$ given that the sum exceeds the threshold. That is,
    $$ \Prob(Y_{t,j}' \in \cdot \mid \Ybold_{t,-j}) = \Prob\Big( Y \in \cdot \mid  Y + \sum_{k \neq j} Y_{t,k} > a_n \Big) \text{.} $$
\item Put $\Ybold_{t}' = (Y_{t,1}, \dots, Y_{t,j-1},Y_{t,j}',Y_{t,j+1}, \dots,Y_{t,n})^{\trans}$.
\end{enumerate}
\item Draw a random permutation $\pi$ of the numbers $\{1,\dots,n\}$ from the uniform distribution and put $\Ybold_{t+1}^{(n)} = (Y_{t,\pi(1)}', \dots, Y_{t,\pi(n)}')$.
\end{enumerate}
Iterate steps (1)-(3) until the entire Markov Chain $(\Ybold_t^{(n)})_{t=0}^{T-1}$ is constructed.
\end{alg}
\begin{remark}
In the heavy-tailed setting the trajectories of the random walk leading to the rare event are likely to consist of one large increment (the big jump) while the other increments are average. The purpose of the permutation step is to force the Markov chain to mix faster by moving the big jump to different locations. However, the permutation step in Algorithm \ref{alg:rw} is not really needed when considering the probability $\Prob(S_{n} > a_{n})$.  This is due to the fact that the summation  is invariant of the ordering of the steps.
\end{remark}

The following proposition confirms that the Markov chain $(\Ybold_t^{(n)})_{t\geq 0}$, generated by Algorithm \ref{alg:rw}, has $F_{A_{n}}^{(n)}$ as its invariant distribution.
\begin{proposition}
	The Markov chain $(\Ybold_{t}^{(n)})_{t\geq 0}$, generated by Algorithm \ref{alg:rw},  has the conditional distribution $F_{A_{n}}^{(n)}$ as its invariant distribution.
\end{proposition}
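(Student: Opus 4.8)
The plan is to use the standard fact that a random-scan Gibbs sampler leaves its target invariant, combined with the exchangeability of the target density \eqref{eq:rw:density}. Write $\mu_{n} := F_{A_{n}}^{(n)}$, and for each $j \in \{1,\dots,n\}$ let $K_{j}$ be the Markov kernel on $A_{n}$ that performs step (2) of Algorithm \ref{alg:rw} for the index $j$, i.e.\ resamples the $j$th coordinate from $\Prob(Y \in \cdot \mid Y + \sum_{k\neq j} y_{k} > a_{n})$ and leaves the other coordinates fixed; and let $\Pi$ be the kernel that applies a uniformly random permutation to the coordinates (step (3)). The transition kernel of the chain is then the composition of a uniform mixture over orderings $(j_{1},\dots,j_{n})$ of the products $K_{j_{n}} \cdots K_{j_{1}}$, followed by $\Pi$. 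Since a composition of $\mu_{n}$-invariant kernels is $\mu_{n}$-invariant, and a mixture of $\mu_{n}$-invariant kernels is $\mu_{n}$-invariant, it suffices to show that each $K_{j}$ and that $\Pi$ preserve $\mu_{n}$.

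For the coordinate update I would disintegrate the density \eqref{eq:rw:density}: for $\mu_{n}$-almost every sub-vector $\Ybold_{-j} = (y_{k})_{k \neq j}$, the conditional density of the $j$th coordinate under $\mu_{n}$ is proportional to $f_{Y}(y_{j})\,I\{y_{j} > a_{n} - \sum_{k\neq j} y_{k}\}$, which is precisely the density of the law $\Prob(Y \in \cdot \mid Y + \sum_{k\neq j} y_{k} > a_{n})$ used in step (2)(a) (the conditioning event has positive probability for $\mu_{n}$-a.e.\ $\Ybold_{-j}$, since on the support of $\mu_{n}$ one has $\sum_{k} y_{k} > a_{n}$, so $K_{j}$ is well defined and stays in $A_{n}$). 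Hence if $\Ybold \sim \mu_{n}$ and $\Ybold'$ is obtained by one application of $K_{j}$, then $\Ybold'_{-j} = \Ybold_{-j}$ and, conditionally on $\Ybold_{-j}$, the law of $Y'_{j}$ equals the $\mu_{n}$-conditional law of $Y_{j}$ given $\Ybold_{-j}$; therefore $\Ybold'$ and $\Ybold$ have the same distribution, so $K_{j}$ preserves $\mu_{n}$. For $\Pi$, one uses that the density \eqref{eq:rw:density} is symmetric under permutations of $(y_{1},\dots,y_{n})$ — the factor $\prod_{j} f_{Y}(y_{j})$ and the constraint $\{y_{1}+\dots+y_{n} > a_{n}\}$ are both permutation-invariant — so permuting the coordinates of a $\mu_{n}$-distributed vector does not change its distribution, and $\Pi$ is $\mu_{n}$-invariant.

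Assembling these observations, the full one-step transition kernel of Algorithm \ref{alg:rw} is $\mu_{n}$-invariant, which is the assertion of the proposition. The only point requiring genuine care is the identification of the $\mu_{n}$-full conditional of coordinate $j$ with the resampling distribution in step (2)(a); this is a routine disintegration of the density \eqref{eq:rw:density}, but it is where one should be attentive to null sets and to the positivity of the conditioning event. The remaining ingredients — stability of invariance under composition and mixing of kernels, and the exchangeability used for $\Pi$ — are immediate.
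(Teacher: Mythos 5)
Your proposal is correct and takes essentially the same route as the paper: both reduce the claim to showing that each single-coordinate update and the permutation step separately preserve $F_{A_{n}}^{(n)}$, with step (3) handled by permutation invariance of the target density \eqref{eq:rw:density}. The only difference is presentational: where you identify the resampling law in step (2)(a) with the full conditional of $F_{A_{n}}^{(n)}$ by disintegration, the paper verifies the same fact through an explicit conditional-expectation computation on product sets $B_{1}\times\dots\times B_{n}\subset A_{n}$.
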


\begin{proof}
The goal is to show that each updating step (Step 2 and 3) of the algorithm preserves stationarity. Since the conditional distribution $F_{A_{n}}^{(n)}$ is permutation invariant it is clear that Step 3 preserves stationarity. Therefore it is sufficient to consider Step 2 of the algorithm.

Let $P_{j}(\ybold,\cdot)$ denote the transition probability of the Markov chain $(\Ybold_{t}^{(n)})_{t\geq 0}$ corresponding to the $j$th component being updated.  It is sufficient to show that, for all $j =1,\dots,m$ and all Borel sets of product form $B_{1}\times \dots \times B_{n} \subset A_{n}$, the following equality holds:
\begin{align*}
	F_{A_{n}}^{(n)}(B_{1}\times \dots \times B_{n}) &= \E_{F_{A_{n}}^{(n)}}[P_{j}(\Ybold, B_{1}\times \dots \times B_{n})].
\end{align*}
Observe that, because $B_{1}\times \dots \times B_{n} \subset A_{n}$,
\begin{align*}
	&F_{A_{n}}^{(n)}(B_{1}\times \dots \times B_{n}) = \E\Big[\prod_{k =1}^{n} I\{Y_{k} \in B_{k}\}\mid Y_{1}+\dots + Y_{n} > a_{n}\Big]\\
	&\; =  \frac{\E[I\{Y_{j} \in B_{j}\} I\{Y_{1}+\dots + Y_{n} > a_{n}\} \prod_{k \neq j}^{m} I\{Y_{k} \in B_{k}\}]}{\Prob(Y_{1}+\dots + Y_{n} > a_{n})}
	\\
	&\; =  \frac{\E\Big[\frac{\E[I\{Y_{j} \in B_{j}\} \mid Y_{j} > a_{n}-(Y_{1}+\dots+Y_{j-1}+Y_{j+1}+\dots Y_{n}),Y_{1},\dots,Y_{j-1},Y_{j+1},\dots,Y_{n}]\prod_{k \neq j}^{n} I\{Y_{k} \in B_{k}\}}{\Prob(Y_{j}> a_{n}-(Y_{1}+\dots+Y_{j-1}+Y_{j+1}+\dots Y_{n}) \mid Y_{1},\dots,Y_{j-1},Y_{j+1},\dots,Y_{n})}\Big]}{\Prob(Y_{1}+\dots + Y_{n} > a_{n})}
	\\
&\; =	\frac{\E[P_{j}((Y_{1},\dots,Y_{n})^{\trans}, B_{1}\times \dots \times B_{n})\prod_{k \neq j}^{n} I\{Y_{k} \in B_{k}\}]}{P(Y_{1}+\dots + Y_{n} > a_{n})}
	 \\ &\quad = \E[P_{j}((Y_{1},\dots,Y_{n})^{\trans}, B_{1}\times \dots \times B_{n}) \mid Y_{1}+\dots + Y_{n} > a_{n}]\\
	 &\; =  \E_{F_{A_{n}}^{(n)}}[P_{j}(\Ybold, B_{1}\times \dots \times B_{n})].
\end{align*}
\end{proof}

As for the ergodic properties, Algorithm \ref{alg:rw} produces a Markov chain which is uniformly ergodic.

\begin{proposition}\label{prop:erg}
For each $n \geq 1$, the Markov chain $(\Ybold_{t}^{(n)})_{t\geq 0}$ is uniformly ergodic. It satisfies the following minorization condition:
there exists $\delta > 0$ such that
\begin{align*}
	\Prob(\Ybold_{1}^{(n)} \in B \mid \Ybold_{0}^{(n)} = \ybold) \geq \delta F_{A_{n}}^{(n)}(B),
\end{align*}
for all $\ybold \in A_{n}$ and all Borel sets $B \subset A_{n}$.
\end{proposition}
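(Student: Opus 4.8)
The plan is to show that the whole state space $A_{n}$ is a small set for the chain with $t_{0}=1$ and with the stationary law $F_{A_{n}}^{(n)}$ itself as the minorizing measure; uniform ergodicity is then a standard consequence (see \cite{Meyn1993,Nummelin1984}). So for each fixed $n$ it suffices to exhibit $\delta>0$ with
\[
\Prob(\Ybold_{1}^{(n)}\in B\mid\Ybold_{0}^{(n)}=\ybold)\ \ge\ \delta\,F_{A_{n}}^{(n)}(B),\qquad \ybold\in A_{n},\ B\subset A_{n}\ \text{Borel.}
\]

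First I would write the one-step transition density of Algorithm~\ref{alg:rw} with respect to Lebesgue measure on $A_{n}$. One iteration uses a uniformly random update order in Step~1 and a uniformly random relabelling in Step~3, and, conditionally on these and on the partially updated vector, the $m$-th coordinate update has density $z\mapsto f_{Y}(z)\,I\{z+\tau_{m}>a_{n}\}/\oF_{Y}(a_{n}-\tau_{m})$, where $\tau_{m}$ is the sum of the other $n-1$ coordinates at that moment. Since $\oF_{Y}\le 1$, each such factor is at least $f_{Y}(z)\,I\{z+\tau_{m}>a_{n}\}$, and averaging over the random order and relabelling retains a factor $(n!)^{-2}$ together with the contribution of any single (update order, relabelling) pair.

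The crux is to show that, whatever the starting state $\ybold\in A_{n}$, there is one such pair for which \emph{all} $n$ indicator factors equal $1$ at an arbitrary target point $\by\in A_{n}$. I would choose to update the coordinates in nondecreasing order of their values in $\ybold$ and to assign to the $m$-th updated coordinate the $m$-th largest value of $\by$ (this fixes the Step~3 relabelling). Writing $o_{(1)}\le\cdots\le o_{(n)}$ for the sorted coordinates of $\ybold$ and $v_{(1)}\ge\cdots\ge v_{(n)}$ for those of $\by$, the configuration after the first $m$ updates has coordinate sum $\sum_{l\le m}v_{(l)}+\sum_{l>m}o_{(l)}$. Since the sum of the $m$ largest among $n$ reals is at least $m/n$ times their total, $\sum_{l\le m}v_{(l)}\ge\frac{m}{n}\sum_{l}v_{(l)}>\frac{m}{n}a_{n}$ because $\by\in A_{n}$, and likewise $\sum_{l>m}o_{(l)}\ge\frac{n-m}{n}\sum_{l}o_{(l)}>\frac{n-m}{n}a_{n}$ because $\ybold\in A_{n}$; hence every intermediate sum exceeds $a_{n}$ and every indicator equals $1$. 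It follows that the one-step transition density is at least $(n!)^{-2}\prod_{k}f_{Y}(y_{k})$ at almost every $\by\in A_{n}$, uniformly in $\ybold$. Integrating over $B$ and using $dF_{A_{n}}^{(n)}/d\by=(p^{(n)})^{-1}\prod_{k}f_{Y}(y_{k})$ on $A_{n}$ yields the minorization with $\delta=p^{(n)}/(n!)^{2}>0$, and uniform ergodicity follows.

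I expect the main obstacle to be exactly this uniformity in the starting state. From an extreme configuration such as $(c,0,\dots,0)$ with $c$ large, updating that first coordinate is forced to keep it above $a_{n}$, so the crude device of conditioning on one coordinate becoming larger than $a_{n}$ produces a lower bound that vanishes on the part of $A_{n}$ where no single coordinate exceeds $a_{n}$. Updating coordinates from smallest to largest, together with the averaging identity for sums of the largest order statistics, is what makes the indicators hold simultaneously for every target in $A_{n}$ and every starting point; the remaining steps (composing the conditional densities, the bound $\oF_{Y}\le1$, and rewriting in terms of $F_{A_{n}}^{(n)}$) are routine bookkeeping.
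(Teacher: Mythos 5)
Your proof is correct, and it has the same overall skeleton as the paper's: a one-step minorization on all of $A_{n}$ with minorizing measure $F_{A_{n}}^{(n)}$, obtained by lower-bounding the transition density through a single well-chosen update order, discarding the denominators via $\oF_{Y}\le 1$, and recognizing $\prod_{k}f_{Y}(y_{k})$ as $p^{(n)}$ times the target density. The only substantive difference is the device used to make all the intermediate indicators equal to one. The paper orders the coordinates according to a coordinatewise comparison of the start and target vectors: first the indices $j$ with $y_{j}\le x_{j}$, then those with $y_{j}>x_{j}$; then every intermediate sum dominates either the starting sum or the target sum, both of which exceed $a_{n}$. You instead update the starting coordinates from smallest to largest while inserting the target values from largest to smallest, and invoke the order-statistics averaging inequality to show each intermediate sum exceeds $\frac{m}{n}a_{n}+\frac{n-m}{n}a_{n}=a_{n}$. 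Both arguments are valid (and neither needs nonnegativity of the steps); the paper's comparison-based ordering is marginally more economical, since it does not tie the inserted values to a fixed sorted pattern and hence does not need to spend a second $1/n!$ on the Step 3 relabelling — the paper's bound is permutation-symmetric in the target, so averaging over the final permutation preserves it, giving $\delta=p^{(n)}/n!$ versus your $\delta=p^{(n)}/(n!)^{2}$. Since any positive $\delta$ yields uniform ergodicity, this difference is immaterial for the conclusion.
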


\begin{proof}
	Take an arbitrary $n \geq 1$. Uniform ergodicity can be deduced from the following minorization condition (see \cite{Nummelin1984}): there exists a probability measure $\nu$, $\delta > 0$, and an integer $t_{0}$ such that
	\begin{align*}
	\Prob(\Ybold_{t_{0}}^{(n)} \in B \mid \Ybold_{0}^{(n)} = \ybold) \geq \delta \nu(B),
\end{align*}
for every $\ybold \in A_{n}$ and  Borel set $B \subset A_{n}$.  Take $\ybold \in A_{n}$ and write $g(\,\cdot \mid \ybold)$ for the density of $\Prob(\Ybold^{(n)}_{1} \in \cdot \mid \Ybold_{0}^{(n)} = \by)$.
The goal is to show that the minorization condition holds with $t_{0} = 1$, $\delta = p^{(n)}/n!$, and $\nu = F_{A_{n}}^{(n)}$.

For any $\xbold \in A_{n}$ there exists an ordering $j_{1}, \dots, j_{n}$ of the numbers $\{1, \dots, n\}$ such that
\begin{align*}
	y_{j_{1}} \leq x_{j_{1}}, \dots, y_{j_{k}} \leq x_{j_{k}}, y_{j_{k+1}} > x_{j_{k+1}}, \dots y_{j_{n}} > x_{j_{n}},
\end{align*}
for some $k \in \{0,\dots, n\}$. The probability to draw this particular ordering in Step 1 of the algorithm is at least $1/n!$. It follows that
\begin{align*}
	g(\xbold \mid \ybold) &\geq \frac{1}{n!} \frac{f_{Y}(x_{j_{1}})I\{x_{j_{1}} \geq a_{n}-\sum_{i\neq j_{1}}y_{i}\}}{\overline{F}_{Y}(a_{n}-\sum_{i\neq j_{1}}y_{i})} \\ & \quad \times \frac{f_{Y}(x_{j_{2}})I\{x_{j_{2}} \geq a_{n}-\sum_{i\neq j_{1},j_{2}}y_{i} - x_{j_{1}}\}}{\overline{F}_{Y}(a_{n}-\sum_{i\neq j_{1},j_{2}}y_{i} - x_{j_{1}})} \\ & \quad \; \;\vdots \\ & \quad  \times \frac{f_{Y}(x_{j_{n}})I\{x_{j_{n}} \geq a_{n}-x_{j_{1}}-\dots x_{j_{n-1}}\}}{\overline{F}_{Y}(a_{n}-x_{j_{1}}-\dots x_{j_{n-1}})}.
\end{align*}
By construction of the ordering $j_{1}, \dots, j_{n}$ all the indicators are equal to $1$ and the expression in the last display is bounded from below by
\begin{align*}
	\frac{1}{n!} \prod_{j=1}^{n} f_{Y}(x_{j}) = \frac{p^{(n)}}{n!} \cdot \frac{\prod_{j=1}^{n} f_{Y}(x_{j}) I\{x_{1} + \dots + x_{n} > a_{n}\}}{p^{(n)}}.
\end{align*}
The proof is completed by integrating both sides of the inequality over any Borel set $B \subset A_{n}$.
\end{proof}

Note that so far the distributional assumption of steps $Y_{1}, \dots, Y_{n}$ of the random walk have been very general. The only assumption has been the existence of a density. For the rare-event properties of the estimator the design of $V^{(n)}$ is essential and this is where the distributional assumptions become important. In this section a heavy-tailed random walk is considered. To be precise, assume that the variables $Y_{1}, \dots, Y_{n}$ are nonnegative and that the tail of $F_{Y}$ is heavy in the sense that there is a sequence $(a_{n})$ of real numbers such that
\begin{align}\label{eq:heavytail}
	\lim_{n \to \infty}\frac{\Prob(S_{n}> a_{n})}{\Prob(M_{n}> a_{n})} = 1,
\end{align}
where $M_{n}$ denotes the maximum of $Y_{1}, \dots, Y_{n}$. The class of distributions for which \eqref{eq:heavytail} holds is large and includes the subexponential distributions. General conditions on the sequence $(a_{n})$ for which \eqref{eq:heavytail} holds are given in \cite{DDS08}. For instance, if $\overline{F}_Y$ is regularly varying at $\infty$ with index $\beta > 1$ then (\ref{eq:heavytail}) holds with $a_{n} = a n$, for $a > 0$.

Next consider the choice of $V^{(n)}$. As observed in Section \ref{sec:mcmc:prob} a good approximation to the conditional distribution $F_{A_{n}}^{(n)}$ is a candidate for $V^{(n)}$. For a heavy-tailed random walk the ``one big jump'' heuristics says that the sum is large most likely  because one of the steps is large. Based on the assumption \eqref{eq:heavytail}  a good candidate for $V^{(n)}$ is the conditional distribution,
$$ V^{(n)}(\cdot) = \Prob( \Ybold^{(n)} \in \cdot \mid M_n > a_n).$$
Then $V^{(n)}$ has a known density with respect to $F^{(n)}(\cdot) = \Prob(\Ybold^{(n)} \in \cdot )$ given by
\begin{align*}
	\frac{d V^{(n)}}{dF^{(n)}}(\ybold) = \frac{1}{\Prob(M_{n} > a_{n})} I\{\ybold : \vee_{j=1}^{n} y_{j} > a_{n}\} =   \frac{1}{1-F_{Y}(a_{n})^{n}} I\{\ybold : \vee_{j=1}^{n}y_{j} > a_{n}\}.
\end{align*}

The estimator of $q^{(n)} = \Prob(S_{n} > a_{n})^{-1}$ is then given by
\begin{equation}\label{eq:rwest}
\widehat{q}^{(n)}_{T} = \frac{1}{T} \sum_{t=0}^{T-1}\frac{d V^{(n)}}{dF^{(n)}}(\Ybold_t^{(n)}) =
\frac{1}{1-F_{Y}(a_{n})^{n}} \cdot \frac{1}{T} \sum_{t=0}^{T-1} I\{\vee_{j=1}^{n} Y_{t,j} > a_{n}\}
\end{equation}
where $(\Ybold_{t}^{(n)})_{t\geq 0}$ is generated by Algorithm \ref{alg:rw}. Note that the estimator \eqref{eq:rwest} can be viewed as the asymptotic approximation $(1-F_{Y}(a_{n})^{n})^{-1}$ of $(p^{(n)})^{-1}$ multiplied by the random correction factor $\frac{1}{T} \sum_{t=0}^{T-1} I\{\vee_{j=1}^{n} Y_{t,j} > a_{n}\}$. The efficiency of this estimator is based on the fact that the random correction factor is likely to be close to $1$ and has small variance.

\begin{theorem} \label{thm:vanishing}
Suppose that \eqref{eq:heavytail} holds. Then the estimator $\widehat q_{T}^{(n)}$ in \eqref{eq:rwest} has vanishing normalized variance for estimating $(p^{(n)})^{-1}$. That is,
\begin{align*}
	\lim_{n \to \infty}  (p^{(n)})^{2} \Var_{F^{(n)}_{A_{n}}}(\widehat q_{T}^{(n)})= 0.
\end{align*}
\end{theorem}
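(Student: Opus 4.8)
The plan is to collapse the normalized variance of $\widehat q_T^{(n)}$ onto the single stationary term $(p^{(n)})^{2}\Var_{F_{A_n}^{(n)}}(u^{(n)}(\Ybold_0^{(n)}))$, evaluate that term in closed form, and then read off the limit from \eqref{eq:heavytail}.

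\emph{Step 1 (reduction to the one-step variance).} I would start from the decomposition \eqref{eq:prob:variance}, applied with $F_A=F_{A_n}^{(n)}$ and $u=\tfrac{dV^{(n)}}{dF^{(n)}}$, and use the crude covariance estimate recorded in Section~\ref{sec:mcmc:prob}: by stationarity of the chain produced by Algorithm~\ref{alg:rw} (whose invariant law is $F_{A_n}^{(n)}$ by the propositions above) together with the Cauchy--Schwarz inequality, $\Cov_{F_{A_n}^{(n)}}(u^{(n)}(\Ybold_s^{(n)}),u^{(n)}(\Ybold_t^{(n)}))\le\Var_{F_{A_n}^{(n)}}(u^{(n)}(\Ybold_0^{(n)}))$; summing over the $T(T-1)/2$ pairs $t<s$ gives
$$(p^{(n)})^{2}\Var_{F_{A_n}^{(n)}}(\widehat q_T^{(n)})\le(p^{(n)})^{2}\Var_{F_{A_n}^{(n)}}(u^{(n)}(\Ybold_0^{(n)})).$$
So it suffices to show the right-hand side vanishes.

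\emph{Step 2 (exact evaluation).} From \eqref{eq:rwest}, $u^{(n)}(\ybold)=(1-F_Y(a_n)^{n})^{-1}I\{\vee_{j=1}^{n}y_j>a_n\}$; writing $c_n=\Prob(M_n>a_n)=1-F_Y(a_n)^{n}$, this is $c_n^{-1}$ times the indicator of $\{M_n>a_n\}$, and since the $Y_j$ are nonnegative one has $\{M_n>a_n\}\subseteq\{S_n>a_n\}=A_n$ (which also makes the factor $I\{\ybold\in A_n\}$ redundant in $u^{(n)}$). Under $F_{A_n}^{(n)}$, the law of $(Y_1,\dots,Y_n)$ given $S_n>a_n$, this inclusion yields $\Prob(M_n>a_n\mid S_n>a_n)=\Prob(M_n>a_n)/p^{(n)}=c_n/p^{(n)}$, hence $\E_{F_{A_n}^{(n)}}[u^{(n)}(\Ybold_0^{(n)})]=1/p^{(n)}$ (recovering unbiasedness) and $\E_{F_{A_n}^{(n)}}[u^{(n)}(\Ybold_0^{(n)})^{2}]=c_n^{-2}\cdot c_n/p^{(n)}=1/(c_np^{(n)})$, so
$$(p^{(n)})^{2}\Var_{F_{A_n}^{(n)}}(u^{(n)}(\Ybold_0^{(n)}))=(p^{(n)})^{2}\Big(\frac{1}{c_np^{(n)}}-\frac{1}{(p^{(n)})^{2}}\Big)=\frac{p^{(n)}}{c_n}-1=\frac{\Prob(S_n>a_n)}{\Prob(M_n>a_n)}-1.$$
By \eqref{eq:heavytail} this ratio tends to $1$, so the display tends to $0$, and combining with Step~1 proves the theorem.

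There is no real obstacle here; the content of the statement is precisely that nothing beyond \eqref{eq:heavytail} is required. The only points deserving care are the set inclusion $\{M_n>a_n\}\subseteq\{S_n>a_n\}$, used (via nonnegativity of the steps) in both moment computations, and the observation that although the crude covariance bound does not decay in $T$, it is harmless because the stationary one-step normalized variance already vanishes in $n$ — which is why, as advertised in the introduction, the proof is a few lines.
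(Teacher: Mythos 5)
Your proof is correct and follows essentially the same route as the paper: the paper's proof likewise computes the stationary one-draw normalized variance $(p^{(n)})^{2}\Var_{F_{A_n}^{(n)}}(u^{(n)}(\Ybold^{(n)}))$ exactly (as the Bernoulli variance of the indicator $I\{M_n>a_n\}$, which equals your $\frac{\Prob(S_n>a_n)}{\Prob(M_n>a_n)}-1$) and sends it to zero via \eqref{eq:heavytail}. The only difference is cosmetic: you spell out the reduction from $\Var(\widehat q_T^{(n)})$ to the one-term variance via the crude covariance bound, which the paper leaves implicit by appealing to the discussion in Section~\ref{sec:mcmc:prob}.
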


\begin{proof}
With $u^{(n)}(\ybold) = \frac{1}{1-F_{Y}(a_{n})^{n}} I\{\vee_{j=1}^{n} y_{j} > a_{n}\}$ it follows from  \eqref{eq:heavytail} that
\begin{align*}
	 &(p^{(n)})^{2} \Var_{F_{A_{n}}^{(n)}}(u^{(n)}(\Ybold^{(n)})) \\
	 & \quad = \frac{\Prob(S_{n}> a_{n})^{2}}{\Prob(M_{n} > a_{n})^{2}} \Var_{F_{A_{n}}^{(n)}}(I\{\Ybold : \vee_{j=1}^{n} Y_{j} > a_{n}\})\\
	 & \quad =  \frac{\Prob(S_{n}> a_{n})^{2}}{\Prob(M_{n} > a_{n})^{2}} \Prob(M_{n} > a_{n} \mid S_{n} > a_{n})\Prob(M_{n} \leq a_{n} \mid S_{n} > a_{n}) \\
	 & \quad = \frac{\Prob(S_{n}> a_{n})}{\Prob(M_{n} > a_{n})} \Big(1-\frac{\Prob(M_{n}> a_{n})}{\Prob(S_{n} > a_{n})}\Big) \to 0.
\end{align*}
This completes the proof.
\end{proof}

\begin{remark}
	Theorem \ref{thm:vanishing} covers a wide range of heavy-tailed distributions and even allows the number of steps to increase with $n$. Its proof is elementary. This is in sharp contrast to the existing proofs of efficiency (bounded relative error, say) for importance sampling algorithms that cover less general models and tend to be long and technical, see e.g.\ \cite{Dupuis2007, BlanchetLiu2008, BL10}. It must be mentioned, though, that Theorem \ref{thm:vanishing} proves efficiency for computing $(p^{(n)})^{-1}$, whereas the authors of \cite{Dupuis2007, BlanchetLiu2008, BL10} prove efficiency for a direct computation of $p^{(n)}$.
\end{remark}

\subsection{An extension to random sums}
\label{sec:geometric}

In application to queueing and ruin theory there is particular interest in sums consisting of a random number of heavy-tailed steps. For instance, the stationary distribution of the waiting time and the workload of an $M/G/1$ queue can be represented as a random sum, see Asmussen (2003), Theorem 5.7. The classical Cram\'er-Lundberg model for the
total claim amount faced by an insurance company is another standard example of a random sum. In this section Algorithm \ref{alg:rw} is modified to efficiently estimate hitting probabilities for heavy-tailed random sums.

Let $Y_1, Y_{2}, \dots$ be non-negative independent random variables with common distribution $F_Y$ and density $f_Y$. Let $(N^{(n)})_{n\geq 1}$ be integer valued random variables independent of $Y_1, Y_{2}, \dots$. Consider the random sum $S_{N^{(n)}} = Y_1 + \cdots + Y_{N^{(n)}}$ and the problem of computing the probability
$$ p^{(n)} = \Prob(S_{N^{(n)}} > a_n) \text{,} $$
where $a_n \to \infty$ at an appropriate rate.

Denote by $\overline{\Ybold}^{(n)}$ the vector $(N^{(n)}, Y_{1}, \dots, Y_{N^{(n)}})^{\trans}$. The conditional distribution of $\overline \Ybold^{(n)}$ given $S_{N^{(n)}} > a_{n}$ is given by
\begin{align*}
	&\Prob(N^{(n)} = k, (Y_{1}, \dots, Y_{k}) \in \cdot \mid S_{N^{(n)}} > a_n) \\ & \quad = \frac{\Prob((Y_{1}, \dots, Y_{k}) \in \cdot\,, S_{k} > a_{n})\Prob(N^{(n)} = k)}{p^{(n)}}.
\end{align*}

A Gibbs sampler for sampling from the above conditional distribution can be constructed essentially as in Algorithm \ref{alg:rw}. The only additional difficulty is to update the random number of steps in an appropriate way. In the following algorithm a particular distribution for updating the number of steps is proposed. To ease the notation the superscript $n$ is suppressed in the description of the algorithm.

\begin{alg} \label{alg:random}
To initiate, draw  $N_{0}$ from $\Prob(N \in \cdot)$ and $Y_{0,1}, \dots, Y_{0,N_{0}}$ such that $Y_{0,1} + \dots + Y_{0,N_{0}} > a_{n}$.  Each iteration of the algorithm consists of the following steps. Suppose $\overline \Ybold_{t} = (k_{t} ,y_{t,1}, \dots, y_{t,k_{t}})$ with $y_{t,1}+\dots+y_{t,k_{t}} > a_{n}$. Write $k^{*}_{t} = \min\{j : y_{t,1}+\dots+y_{t,j} > a_{n}\}$.
\begin{enumerate}
\item Sample number of steps $N_{t+1}$ from the distribution
\begin{align*}
	p(k_{t+1} \mid k^{*}_{t}) = \frac{\Prob(N = k_{t+1}) I\{k_{t+1} \geq k^{*}_{t}\}}{P(N \geq k^{*}_{t})}.
\end{align*}
If $N_{t+1} > k_{t}$, sample $Y_{t+1,k_{t}+1}, \dots, Y_{t+1,N_{t+1}}$ independently from $F_{Y}$ and put $\Ybold_{t}^{(1)} = (Y_{t,1}, \dots, Y_{t,k_{t}}, Y_{t+1,k_{t}+1}, \dots, Y_{t+1,N_{t+1}})$.
\item Proceed by updating all the individual steps as in Algorithm \ref{alg:rw}.
\begin{enumerate}
\item Draw  $j_{1}, \dots, j_{N_{t+1}}$  from $\{1,\dots,N_{t+1}\}$ without replacement and proceed by updating the components of $\Ybold_{t}^{(1)}$ in the order thus obtained.
\item For each $k=1,\dots,N_{t+1}$, repeat the following.
\begin{enumerate}
\item Let $j = j_{k}$ be the index to be updated and write $$\Ybold_{t,-j}^{(1)} = (Y_{t,1}^{(1)}, \dots, Y_{t,j-1}^{(1)},Y_{t,j+1}^{(1)},\dots,Y_{t,N_{t+1}}^{(1)}).$$ Sample $Y_{t,j}^{(2)}$ from the conditional distribution of $Y$ given that the sum exceeds the threshold. That is,
    $$ \Prob(Y_{t,j}^{(2)} \in \cdot \mid \Ybold_{t,-j}^{(1)}) = \Prob\Big( Y \in \cdot \mid  Y + \sum_{k\neq j} Y_{t,k}^{(1)} > a_n \Big) \text{.} $$
\item Put $\Ybold_{t}^{(2)} = (Y_{t,1}^{(1)}, \dots, Y_{t,j-1}^{(1)},Y_{t,j}^{(2)},Y_{t,j+1}^{(1)}, \dots,Y_{t,N_{t+1}}^{(1)})^{\trans}$.
\end{enumerate}
\item Draw a random permutation $\pi$ of the numbers $\{1,\dots,N_{t+1}\}$ from the uniform distribution and put $\overline{\Ybold}_{t+1} = (N_{t+1}, Y_{t,\pi(1)}^{(2)}, \dots, Y_{t,\pi(N_{t+1})}^{(2)})$.
\end{enumerate}
\end{enumerate}
Iterate until the entire Markov Chain $(\overline{\Ybold}_t)_{t=0}^{T-1}$ is constructed.
\end{alg}

\begin{proposition}
 The Markov chain {\rm $(\overline{\Ybold}_{t})_{t \geq 0}$} generated by Algorithm \ref{alg:random} has the conditional distribution $\Prob((N, Y_{1}, \dots, Y_{N}) \in \cdot \mid Y_{1} + \dots Y_{N} > a_{n})$ as its invariant distribution.
\end{proposition}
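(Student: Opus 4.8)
The plan is to show that the one‑step transition kernel of Algorithm~\ref{alg:random} is a composition of the Step~1 kernel and the Step~2 kernel, and that each of the two leaves the target distribution invariant; the claim then follows because a composition of invariant kernels is invariant. Denote the target by $\pi_{n}$, the law of $(N, Y_{1},\dots,Y_{N})$ given $\{Y_{1}+\dots+Y_{N}>a_{n}\}$. On the event $\{N=k\}$ it has density proportional to $\Prob(N=k)\prod_{j=1}^{k} f_{Y}(y_{j})\,I\{y_{1}+\dots+y_{k}>a_{n}\}$ with respect to the product of counting measure in $k$ and Lebesgue measure in $(y_{1},\dots,y_{k})$; in particular, conditionally on $\{N=k\}$, it is exactly the invariant law of Algorithm~\ref{alg:rw} run with $k$ steps and threshold $a_{n}$.

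For Step~2 I would simply invoke the previous proposition. Conditionally on the number of steps $N_{t+1}=k$ produced by Step~1, Step~2 reproduces, for a walk of length $k$ with threshold $a_{n}$, exactly the single‑coordinate updates and the uniform reorderings of Algorithm~\ref{alg:rw}, and it leaves the value $k$ unchanged. Hence the argument that Algorithm~\ref{alg:rw} preserves $F_{A_{n}}^{(n)}$ applies verbatim, with $k$ in place of $n$, to show that Step~2 preserves the $\pi_{n}$‑conditional law given each value of $k$; since $k$ is untouched, Step~2 preserves $\pi_{n}$.

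The substantive point is Step~1, and the cleanest route I see is to enlarge the state space. I would augment the state $(k, y_{1},\dots,y_{k})$ to $(k, y_{1}, y_{2},\dots)$ by appending an i.i.d.\ $F_{Y}$‑sequence, and let $\tilde\pi_{n}$ be the law whose density with respect to the product of counting measure on the step count and $F_{Y}^{\otimes\infty}$ is proportional to $\Prob(N=k)\,I\{y_{1}+\dots+y_{k}>a_{n}\}$. Its marginal on the first $k$ coordinates is $\pi_{n}$, and it is a genuine probability measure since the density has total mass $\sum_{k}\Prob(N=k)\Prob(S_{k}>a_{n})=p^{(n)}<\infty$. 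The key elementary observation is that, because the $Y_{j}$ are nonnegative, the partial sums are nondecreasing, so $\{y_{1}+\dots+y_{k}>a_{n}\}=\{k\geq k^{*}\}$ where $k^{*}=\min\{j:y_{1}+\dots+y_{j}>a_{n}\}$. Consequently the $\tilde\pi_{n}$‑full conditional of $k$ given $(y_{1},y_{2},\dots)$ is $\Prob(N=k)\,I\{k\geq k^{*}\}/\Prob(N\geq k^{*})=p(k\mid k^{*})$, and, given $k$, the coordinates $y_{k+1},y_{k+2},\dots$ remain i.i.d.\ $F_{Y}$. This is precisely what Step~1 does — draw $N_{t+1}$ from $p(\cdot\mid k^{*})$ and, if $N_{t+1}>k_{t}$, materialize fresh i.i.d.\ $F_{Y}$ draws — so Step~1 is an ordinary Gibbs update of the coordinate $k$ in the augmented chain and therefore preserves $\tilde\pi_{n}$, hence $\pi_{n}$. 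Two minor points must be dispatched along the way: when $N_{t+1}\leq k_{t}$ the algorithm discards $y_{N_{t+1}+1},\dots,y_{k_{t}}$, which under $\tilde\pi_{n}$ are i.i.d.\ $F_{Y}$ independent of everything retained, so discarding and later redrawing them is distributionally faithful; and $N_{t+1}\geq k^{*}$ forces $y_{1}+\dots+y_{N_{t+1}}>a_{n}$, so the updated state stays in the support of $\pi_{n}$.

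With both steps shown to preserve $\pi_{n}$, composing them finishes the proof. I expect the only real obstacle to be bookkeeping around the variable dimension of the state space; the augmentation device is exactly what converts the awkward ``resize'' move of Step~1 into a transparent Gibbs update, after which everything reduces either to exchangeability or to the deterministic‑length case already handled.
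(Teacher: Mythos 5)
Your proof is correct, but it takes a genuinely different route from the paper's for the substantive part. Both arguments share the same reduction: Step 2 preserves stationarity by the fixed-length proposition applied conditionally on the current number of steps (with threshold $a_{n}$), so only Step 1 needs work. For Step 1 the paper writes out the transition kernel $P^{(1)}$ explicitly and verifies invariance by a direct computation: it integrates the kernel against the target, partitions according to the value of $k^{*}$ (the sets $B_{k^{*}}$ in the paper play exactly the role of your identity $I\{y_{1}+\dots+y_{k}>a_{n}\}=I\{k\geq k^{*}\}$), swaps the order of summation, and uses the definition of $p(k_{t+1}\mid k^{*})$ to collapse the sums back to the target measure. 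You instead augment the state with an infinite i.i.d.\ $F_{Y}$ tail and observe that, under the augmented target, the full conditional of the step count given the whole sequence is precisely $p(\cdot\mid k^{*})$ while the tail beyond the current length is i.i.d.\ $F_{Y}$ independent of what is retained; Step 1 then becomes an ordinary Gibbs update of one coordinate, and the ``discard and redraw'' bookkeeping you mention is exactly what is needed to identify the algorithm's visible chain with the projection of the augmented chain. What each approach buys: the paper's calculation is self-contained and never leaves the original variable-dimension state space, at the cost of a long chain of manipulations; your argument is shorter and more conceptual, it explains \emph{why} the proposal $p(\cdot\mid k^{*})$ is the natural one (it is an exact full conditional), and it isolates the only structural ingredient used, namely nonnegativity of the steps making the partial sums monotone. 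The one place where your write-up is lighter than a full proof is the faithfulness of the lazy materialization of the tail; the one-sentence justification you give (the tail is conditionally i.i.d.\ $F_{Y}$, independent of the retained coordinates, under the augmented target) is the right one and is easily made rigorous by coupling the algorithm with the augmented chain in which the tail is refreshed from $F_{Y}$ at the start of each iteration.
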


\begin{proof}
The only essential difference from Algorithm \ref{alg:rw} is the first step of the algorithm, where the number of steps and possibly the additional steps are updated. Therefore, it is sufficient to prove that the first step of the algorithm preserves stationarity. The transition probability of the first step, starting from a state $(k_{t}, y_{t,1}, \dots, y_{t,k_{t}})$ with $k^{*}_{t} = \min\{j : y_{t,1}+\dots+y_{t,j} > a_{n}\}$, can be written as follows.
\begin{align*}
& P^{(1)}(k_{t},y_{t,1}, \dots, y_{t, k_{t}}; k_{t+1}, A_{1}\times \cdots \times A_{k_{t+1}}) \\
& \, = \Prob\big( N_{t+1} = k_{t+1}, (Y_{t,1}, \dots, Y_{t,k_{t+1}}) \in A_{1} \times \dots \times A_{k_{t+1}} \\
& \quad \quad \quad \quad \mid N_{t} = k_{t}, Y_{t,1} = y_{t,1}, \dots, Y_{t,k_{t}}=y_{t,k_{t}} \big) \\
& \, = \left\{\begin{array}{ll}
    p(k_{t+1} \mid k^{*}_{t}) \prod_{k=1}^{k_{t+1}} I\{y_{t,k} \in A_{k}\}, & k_{t+1} \leq k_{t},\\
    p(k_{t+1} \mid k^{*}_{t}) \prod_{k=1}^{k_{t}} I\{y_{t,k} \in A_{k}\} \prod_{k=k_{t}+1}^{k_{t+1}} F_{Y}(A_{k}),
    & k_{t+1} > k_{t}.\end{array}\right.
\end{align*}
Consider the stationary probability of a set of the form $\{k_{t+1}\}\times A_{1}\times \cdots \times A_{k_{t+1}}$. It holds that
\begin{align*}
& \E_{\pi}[P^{(1)}(N_{t},Y_{t,1}, \dots, Y_{t,N_{t}}; k_{t+1}, A_{1}\times \cdots \times A_{k_{t+1}})] \\
& \, = \frac{1}{\Prob(S_{N} > a_{n})} \E[ P^{(1)}(N,Y_{1}, \dots, Y_{N}; k_{t+1}, A_{1}\times \cdots \times A_{k_{t+1}})I\{S_N > a_{n}\}] \\
& \, = \frac{1}{\Prob(S_{N} > a_{n})}\sum_{k_{t}=1}^{\infty} \Prob(N = k_{t}) \\
& \quad \quad \quad \quad \quad \quad \times P^{(1)}(k_{t},Y_{1}, \dots, Y_{k_t}; k_{t+1}, A_{1}\times \cdots \times A_{k_{t+1}}) I\{S_{k_{t}} > a_{n}\}.
\end{align*}
With $B_{k^{*}} = \{(y_{1}, y_{2}, \dots) : \min\{j: y_{1}+\dots+y_{j} > a\} = k^{*}\}$,  $A^{\otimes}_{k_{t}} = A_{1}\times \cdots \times A_{k_{t}}$, and $A^{\otimes}_{k_{t+1}} = A_{1}\times \cdots \times A_{k_{t+1}}$ the expression in the last display can be written as
\begin{align*}
&\frac{1}{\Prob(S_{N} > a_{n})}\Bigg( \sum_{k_{t}=1}^{k_{t+1}} \Prob(N = k_{t}) \\
& \quad \quad \quad \times \E\Big[ \sum_{k^{*} =1}^{k_{t}} I\{(Y_{1}, \dots, Y_{k_{t}}) \in B_{k^{*}}\} P^{(1)}(k_{t},Y_{1}, \dots, Y_{k_{t}}; k_{t+1}, A^{\otimes}_{k_{t+1}}) \Big] \\
& \quad + \sum_{k_{t}=k_{t+1}+1}^{\infty} \Prob(N = k_{t}) \\
& \quad \quad \quad \times \E\Big[ \sum_{k^{*} =1}^{k_{t+1}} I\{(Y_{1}, \dots, Y_{k_{t+1}}) \in B_{k^{*}}\} P^{(1)}(k_{t},Y_{1}, \dots, Y_{k_{t}}; k_{t+1}, A^{\otimes}_{k_{t+1}}) \Big] \Bigg).
\end{align*}
Inserting the expression for $P^{(1)}$ the last expression equals
\begin{align*}
&\frac{1}{\Prob(S_{N} > a)} \Bigg( \sum_{k_{t}=1}^{k_{t+1}} \Prob(N = k_{t}) \\
& \quad \quad \quad \times \sum_{k^{*} =1}^{k_{t}} \Prob\big( (Y_{1}, \dots, Y_{k_{t}}) \in B_{k^{*}} \cap A^{\otimes}_{k_{t}} \big) p(k_{t+1} \mid k^{*}) \prod_{j=k_{t}+1}^{k_{t+1}} F_{Y}(A_{j}) \\
& \quad + \sum_{k_{t}=k_{t+1}+1}^{\infty} \Prob(N = k_{t})
\sum_{k^{*} =1}^{k_{t+1}} \Prob\big( (Y_{1}, \dots, Y_{k_{t+1}}) \in B_{k^{*}} \cap A^{\otimes}_{k_{t+1}} \big) p(k_{t+1} \mid k^{*}) \Bigg) \text{.}
\end{align*}
Changing the order of summation the last expression equals
\begin{align*}
& \frac{1}{\Prob(S_{N} > a_{n})} \Bigg( \sum_{k^{*}=1}^{k_{t+1}} \sum_{k_{t}=k^{*}}^{k_{t+1}} \Prob(N = k_{t})
\\ & \quad \quad \quad \times
\Prob \big((Y_{1}, \dots, Y_{k_{t}}) \in B_{k^{*}} \cap A^{\otimes}_{k_{t}} \big) p(k_{t+1} \mid k^{*}) \prod_{j=k_{t}+1}^{k_{t+1}} F_{Y}(A_{j}) \\
& \quad + \sum_{k^{*}=1}^{k_{t+1}} \sum_{k_{t}=k_{t+1}+1}^{\infty} \Prob(N = k_{t})
\Prob \big( (Y_{1}, \dots, Y_{k_{t+1}}) \in B_{k^{*}}  \cap A^{\otimes}_{k_{t+1}} \big) p(k_{t+1} \mid k^{*})\Bigg) \text{.}
\end{align*}
Since $\Prob \big((Y_{1}, \dots, Y_{k_{t}}) \in B_{k^{*}} \cap A^{\otimes}_{k_{t}} \big) \prod_{j=k_{t}+1}^{k_{t+1}} F_{Y}(A_{j}) = \Prob \big( (Y_{1}, \dots, Y_{k_{t+1}}) \in B_{k^{*}} \cap A^{\otimes}_{k_{t+1}} \big)$ the last expression equals
\begin{align*}
& \frac{1}{\Prob(S_{N} > a_{n})} \Bigg( \sum_{k^{*}=1}^{k_{t+1}} \sum_{k_{t}=k^{*}}^{k_{t+1}} \Prob(N = k_{t})
\Prob \big( (Y_{1}, \dots, Y_{k_{t+1}}) \in B_{k^{*}} \cap A^{\otimes}_{k_{t+1}} \big) p(k_{t+1} \mid k^{*}) \\
& \quad + \sum_{k^{*}=1}^{k_{t+1}} \sum_{k_{t}=k_{t+1}+1}^{\infty} \Prob(N = k_{t})
\Prob \big( (Y_{1}, \dots, Y_{k_{t+1}}) \in B_{k^{*}}  \cap A^{\otimes}_{k_{t+1}} \big) p(k_{t+1} \mid k^{*})\Bigg) \text{.}
\end{align*}
Summing over $k_{t}$ the last expression equals
\begin{align*}
& \frac{1}{\Prob(S_{N} > a_{n})} \Bigg(\sum_{k^{*}=1}^{k_{t+1}} \Prob \big((Y_{1}, \dots, Y_{k_{t+1}}) \in B_{k^{*}} \cap A^{\otimes}_{k_{t+1}} \big)
p(k_{t+1} \mid k^{*}) \Prob(k^{*} \leq N \leq k_{t+1})
\\ & \quad
+ \sum_{k^{*}=1}^{k_{t+1}} \Prob \big((Y_{1}, \dots, Y_{k_{t+1}}) \in B_{k^{*}} \cap A^{\otimes}_{k_{t+1}} \big)
p(k_{t+1} \mid k^{*}) \Prob(N \geq k_{t+1}+1)\Bigg) \text{.}
\end{align*}
From the definition of $p(k_{t+1} \mid k^{*})$ it follows that the last expression equals
\begin{align*}
& \frac{1}{\Prob(S_{N} > a_{n})} \sum_{k^{*}=1}^{k_{t+1}} \Prob\big( (Y_{1}, \dots, Y_{k_{t+1}}) \in B_{k^{*}} \cap A^{\otimes}_{k_{t+1}} \big)
p(k_{t+1} \mid k^{*}) P(N \geq k^{*}) \\
& \, = \frac{1}{\Prob(S_{N} > a_{n})}\sum_{k^{*}=1}^{k_{t+1}} \Prob\big( (Y_{1}, \dots, Y_{k_{t+1}}) \in B_{k^{*}} \cap A^{\otimes}_{k_{t+1}} \big) P(N = k_{t+1}) \\
& \, = \frac{1}{\Prob(S_{N} > a_{n})} \Prob\big( (Y_{1}, \dots, Y_{k_{t+1}}) \in A^{\otimes}_{k_{t+1}} \big) P(N = k_{t+1}) \\
& \, = \Prob \big( N = k_{t+1}, (Y_{1}, \dots, Y_{k_{t+1}}) \in A^{\otimes}_{k_{t+1}} \mid Y_{1}+ \dots + Y_{N} > a_{n} \big) \text{,}
\end{align*}
which is the desired invariant distribution. This completes the proof.
\end{proof}

\begin{proposition}
The Markov chain $(\overline \Ybold_{t})_{t\geq 0}$ generated by Algorithm \ref{alg:random} is uniformly ergodic. It satisfies the following minorization condition:
there exists $\delta > 0$ such that
\begin{align*}
	\Prob(\overline \Ybold_{1} \in B \mid \overline \Ybold_{0} = \overline \ybold) \geq \delta \,\Prob((N,Y_{1}, \dots, Y_{N}) \in B \mid Y_{1}+\dots+Y_{N} > a_{n}),
\end{align*}
for all $\overline \ybold \in A = \cup_{k \geq 1} \{(k, y_{1}, \dots, y_{k}): y_{1}+ \dots + y_{k} > a_{n}\}$ and all Borel sets $B \subset A$.
\end{proposition}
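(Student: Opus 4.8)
The plan is to follow verbatim the route used for Algorithm \ref{alg:rw} in Proposition \ref{prop:erg}. Fix $n$. By the standard characterization of uniform ergodicity via a one-step minorization (see \cite{Nummelin1984}) it suffices to exhibit $\delta>0$ such that the one-step transition density of the chain $(\overline\Ybold_t)_{t\ge0}$ generated by Algorithm \ref{alg:random}, evaluated at a target state $\overline\xbold=(m,x_1,\dots,x_m)\in A$, is bounded below by $\delta$ times the invariant density $\Prob(N=m)\prod_{j=1}^m f_Y(x_j)I\{x_1+\dots+x_m>a_n\}/p^{(n)}$ at that state. Integrating such a pointwise bound over $B\subset A$ then gives the claimed minorization with $\nu$ the conditional distribution $\Prob((N,Y_1,\dots,Y_N)\in\cdot\mid Y_1+\dots+Y_N>a_n)$.

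To produce that pointwise bound I would factor the transition through Step 1 and Step 2 of the algorithm. Fix the source $\overline\ybold=(k,y_1,\dots,y_k)\in A$ and set $k^{*}=\min\{j: y_1+\dots+y_j>a_n\}\le k$. In Step 1 the number of steps is set equal to $m$ with probability $p(m\mid k^{*})=\Prob(N=m)/\Prob(N\ge k^{*})\ge\Prob(N=m)$ whenever $m\ge k^{*}$; if moreover $m>k$ the fresh steps $z_{k+1},\dots,z_m$ are drawn from $F_Y$, but since they are overwritten in Step 2 I would integrate them out, using that the partial sum of the intermediate vector $(y_1,\dots,y_k,z_{k+1},\dots,z_m)$ (or $(y_1,\dots,y_m)$ when $m\le k$) exceeds $a_n$ for \emph{every} choice of the $z$'s (it already does after the first $k^{*}$ coordinates) and that $\int\prod_{j>k}f_Y(z_j)\,dz=1$. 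Step 2 is, after the number of steps is fixed, structurally identical to the Gibbs sweep of Algorithm \ref{alg:rw}, so the combinatorial argument from the proof of Proposition \ref{prop:erg} applies directly: there is an ordering of the $m$ coordinates — those that must increase relative to the current value first, the others afterwards — along which every indicator appearing in the product of the conditional densities equals $1$ and the running sum stays above $a_n$, yielding a lower bound $\tfrac{1}{m!}\prod_{j=1}^m f_Y(x_j)$ once the survival functions $\overline F_Y(\cdot)\le1$ are discarded. Multiplying the two contributions gives transition density at $\overline\xbold$ at least $\Prob(N=m)\prod_{j=1}^m f_Y(x_j)/m!$, i.e.\ $p^{(n)}/m!$ times the invariant density, as desired.

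The step I expect to be the \textbf{main obstacle} is the treatment of the number-of-steps update in Step 1; the rest is either identical to Algorithm \ref{alg:rw} or routine bookkeeping about the auxiliary steps. Concretely one must check that the constraint $N_{t+1}\ge k^{*}$ does not prevent the comparison with the invariant distribution, and that the factor $1/\big(m!\,\Prob(N\ge k^{*})\big)$ produced above admits a lower bound uniform in the current state $\overline\ybold$ — equivalently, uniform in $k^{*}$ and in $m$ over the support of $N$ — which is exactly what pins down the admissible value of $\delta$. Once this uniformity is secured, the minorization condition follows and hence so does uniform ergodicity, exactly as in Proposition \ref{prop:erg}.
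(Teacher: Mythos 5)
The paper never writes this proof out (it is waved off as a ``minor modification'' of Proposition \ref{prop:erg}), and your proposal follows exactly the route that remark suggests; your treatment of Step 2 and of integrating out the freshly drawn steps is fine. But you stop at the point you yourself flag as the main obstacle, and that obstacle is not routine bookkeeping to be ``secured'' later --- it is where this route breaks. Two concrete problems. First, from a current state $\overline \ybold=(k,y_{1},\dots,y_{k})\in A$ with $k^{*}=\min\{j: y_{1}+\dots+y_{j}>a_{n}\}\geq 2$ (for instance $k=2$, $y_{1}=y_{2}=0.6\,a_{n}$), Step 1 forces $N_{t+1}\geq k^{*}$, so the one-step kernel assigns \emph{zero} mass to every state with fewer than $k^{*}$ coordinates, whereas the proposed minorizing measure $\Prob((N,Y_{1},\dots,Y_{N})\in\cdot\mid S_{N}>a_{n})$ charges such states (take $B=\{(1,y):y>a_{n}\}\subset A$, which has positive conditional probability whenever $\Prob(N=1)>0$ and $\overline{F}_{Y}(a_{n})>0$, as in the paper's geometric/Pareto experiments). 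Hence no $\delta>0$ can satisfy the claimed inequality uniformly over $\overline\ybold\in A$ in one step: the constraint $N_{t+1}\geq k^{*}$ really does prevent the comparison. Second, even restricted to target states with $m\geq k^{*}$ coordinates, your bound gives a transition density at least $\Prob(N=m)\prod_{j=1}^{m}f_{Y}(x_{j})/\bigl(m!\,\Prob(N\geq k^{*})\bigr)$, i.e.\ at least $p^{(n)}/m!$ times the invariant density; since $N$ has unbounded support, $\inf_{m}p^{(n)}/m!=0$, so this route cannot produce a state-uniform $\delta$ even where the kernel is positive.

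So the proposal does not prove the proposition; what it actually shows is that the ``minor modification'' is not minor: a one-step minorization with $\nu$ equal to the full conditional law cannot hold for this algorithm when $N$ is non-degenerate, and any correct argument must depart from the Proposition \ref{prop:erg} template --- e.g.\ a $t_{0}$-step minorization, a minorization on a small set of states with bounded $k^{*}$ combined with a drift condition, a different minorizing measure, or extra assumptions on the support of $N^{(n)}$ (none of which your sketch, or the paper, supplies).
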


The proof requires only a minor modification from the non-random case, Proposition \ref{prop:erg},  and is therefore omitted.

Next consider the distributional assumptions and the design of $V^{(n)}$. The main focus is on the rare event properties of the estimator and therefore the large deviation parameter $n$ will be suppressed to ease notation. Let the distribution of the number of steps $\Prob(N^{(n)} \in \cdot)$ to depend on $n$. By a similar reasoning as in the case of non-random number of steps the following assumption are imposed: the variables $N^{(n)}$, $Y_{1}, Y_{2}, \dots$ and the numbers $a_{n}$ are such that
\begin{align}\label{eq:heavytailrandsum}
	\lim_{n\to\infty} \frac{\Prob(Y_{1} + \dots + Y_{N^{(n)}} > a_{n})}{\Prob(M_{N^{(n)}} < a_{n})} = 1,
\end{align}
where $M_{k} = \max\{Y_{1}, \dots, Y_{k}\}$. Note that the denominator can be expressed  as
\begin{align*}
	\Prob(M_{N^{(n)}} > a_{n}) &= \sum_{k=1}^{\infty} \Prob(M_{k} > a_{n})\Prob(N^{(n)} = k)\\
	&= \sum_{k=1}^{\infty} [1-F_{Y}(a_{n})^{k}]\Prob(N^{(n)} = k) \\
	&= 1-g_{N^{(n)}}(F_{Y}(a_{n})),
\end{align*}
where $g_{N^{(n)}}(t) = \E[t^{N^{(n)}}]$ is the generating function of $N^{(n)}$. Sufficient conditions for \eqref{eq:heavytailrandsum} to hold are given in \cite{KM97}, Theorem 3.1. For instance, if $\overline{F}_{Y}$ is regularly varying at $\infty$ with index $\beta > 1$ and $N^{(n)}$ has Poisson distribution with mean $\lambda_{n} \to \infty$, as $n \to \infty$, then \eqref{eq:heavytailrandsum} holds with $a_{n} = a \lambda_{n}$, for $a > 0$.

Similarly to the non-random setting a good candidate for $V^{(n)}$ is the conditional distribution,
$$ V^{(n)}(\cdot) = \Prob( \overline{\Ybold}^{(n)} \in \cdot \mid M_{N^{(n)}} > a_n).$$
Then $V^{(n)}$ has a known density with respect to $F^{(n)}(\cdot) = \Prob(\overline{\Ybold}^{(n)} \in \cdot )$ given by
\begin{align*}
	\frac{dV^{(n)}}{dF^{(n)}}(k,y_{1}, \dots,y_{k}) &= \frac{1}{\Prob(M_{N^{(n)}} > a_{n})} I\{(y_{1}, \dots, y_{k}) : \vee_{j=1}^{k} y_{j} > a_{n}\}\\ & =   \frac{1}{1-g_{N^{(n)}}(F_{Y}(a_{n}))} I\{(y_{1}, \dots, y_{k}): \vee_{j=1}^{k}y_{j} > a_{n}\}.
\end{align*}

The estimator of $q^{(n)} = \Prob(S_{n} > a_{n})^{-1}$ is given by
\begin{equation}\label{eq:estrandsum}
\widehat q_{T}^{(n)} = \frac{1}{T} \sum_{t=0}^{T-1} \frac{dV^{(n)}}{dF^{(n)}}(\overline{\Ybold}_t^{(n)}) = \frac{1}{g_{N^{(n)}}(F_{Y}(a_{n}))}\cdot \frac{1}{T} \sum_{t=0}^{T-1} I\{\vee_{j=1}^{N_{t}} Y_{t,j} > a_{n}\},
\end{equation}
where $(\overline{\Ybold}_{t}^{(n)})_{t\geq 0}$ is generated by Algorithm \ref{alg:random}.

\begin{theorem}\label{thm:randsum}
	Suppose \eqref{eq:heavytailrandsum} holds. The estimator $\widehat{q}_{T}^{(n)}$ in \eqref{eq:estrandsum} has vanishing normalized variance. That is,
\begin{align*}
	\lim_{n \to \infty}  (p^{(n)})^{2} \Var_{\pi_{n}}(\widehat{q}^{(n)}_{T}) = 0,
\end{align*}
where $\pi_{n}$ denotes the conditional distribution $\Prob( \overline{\Ybold}^{(n)} \in \cdot \mid S_{N^{(n)}} > a_n)$.
\end{theorem}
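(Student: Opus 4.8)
The plan is to replay the proof of Theorem~\ref{thm:vanishing} almost verbatim, with the deterministic number of steps replaced by the random variable $N^{(n)}$. First I would record that the estimator \eqref{eq:estrandsum} is of the form \eqref{eq:hatpn} (equivalently \eqref{eq:hatthetan}) with per-step summand
\begin{align*}
	u^{(n)}(\overline{\Ybold}^{(n)}) = \frac{dV^{(n)}}{dF^{(n)}}(\overline{\Ybold}^{(n)}) = \frac{1}{1-g_{N^{(n)}}(F_{Y}(a_{n}))}\,I\{\vee_{j=1}^{N^{(n)}} Y_{j} > a_{n}\} = \frac{1}{\Prob(M_{N^{(n)}}>a_{n})}\,I\{M_{N^{(n)}}>a_{n}\},
\end{align*}
and that $\pi_{n}=\Prob(\overline{\Ybold}^{(n)}\in\cdot\mid S_{N^{(n)}}>a_{n})$ is the invariant distribution of the chain produced by Algorithm~\ref{alg:random}. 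By the variance decomposition exactly as in \eqref{eq:prob:variance}/\eqref{eq:exp:variance}, together with the crude covariance bound $\Cov_{\pi_{n}}(u^{(n)}(\overline{\Ybold}^{(n)}_{s}),u^{(n)}(\overline{\Ybold}^{(n)}_{t}))\le \Var_{\pi_{n}}(u^{(n)}(\overline{\Ybold}^{(n)}_{0}))$ (Cauchy--Schwarz plus stationarity of the chain), the normalized variance of $\widehat q_{T}^{(n)}$ is bounded by $(p^{(n)})^{2}\Var_{\pi_{n}}(u^{(n)}(\overline{\Ybold}^{(n)}))$, so it suffices to show that this single-term normalized variance tends to $0$.

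Next I would compute that quantity explicitly. Since the $Y_{j}$ are nonnegative we have $\{M_{N^{(n)}}>a_{n}\}\subseteq\{S_{N^{(n)}}>a_{n}\}$, hence
\begin{align*}
	\pi_{n}(M_{N^{(n)}}>a_{n}) = \Prob(M_{N^{(n)}}>a_{n}\mid S_{N^{(n)}}>a_{n}) = \frac{\Prob(M_{N^{(n)}}>a_{n})}{\Prob(S_{N^{(n)}}>a_{n})}.
\end{align*}
Writing $c_{n}=\Prob(M_{N^{(n)}}>a_{n})$ and using $p^{(n)}=\Prob(S_{N^{(n)}}>a_{n})$, the variance of the indicator gives
\begin{align*}
	(p^{(n)})^{2}\Var_{\pi_{n}}\big(u^{(n)}(\overline{\Ybold}^{(n)})\big) = \frac{(p^{(n)})^{2}}{c_{n}^{2}}\,\pi_{n}(M_{N^{(n)}}>a_{n})\big(1-\pi_{n}(M_{N^{(n)}}>a_{n})\big) = \frac{p^{(n)}}{c_{n}}\Big(1-\frac{c_{n}}{p^{(n)}}\Big).
\end{align*}
By assumption \eqref{eq:heavytailrandsum} (whose denominator, as the subsequent display shows, is $c_{n}=1-g_{N^{(n)}}(F_{Y}(a_{n}))=\Prob(M_{N^{(n)}}>a_{n})$) we have $p^{(n)}/c_{n}\to 1$, so the right-hand side tends to $1\cdot(1-1)=0$. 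Combining with the previous paragraph completes the proof.

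I do not expect a genuine obstacle here; the argument is a direct transcription of Theorem~\ref{thm:vanishing}. The only points requiring a little care are the inclusion $\{M_{N^{(n)}}>a_{n}\}\subseteq\{S_{N^{(n)}}>a_{n}\}$ now holding with a random index (so that $\Prob(M_{N^{(n)}}>a_{n},S_{N^{(n)}}>a_{n})=\Prob(M_{N^{(n)}}>a_{n})$), the identification of the denominator in \eqref{eq:heavytailrandsum} with $\Prob(M_{N^{(n)}}>a_{n})$, and noting that the crude covariance bound needs only stationarity of the chain, which is guaranteed by the invariance result for Algorithm~\ref{alg:random}.
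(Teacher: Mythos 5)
Your proposal is correct and follows essentially the same route as the paper: the paper's proof likewise reduces to the single-term normalized variance $(p^{(n)})^{2}\Var_{\pi_{n}}(u^{(n)}(\overline{\Ybold}^{(n)}))$ and evaluates it as $\frac{\Prob(S_{N^{(n)}}>a_{n})}{\Prob(M_{N^{(n)}}>a_{n})}\bigl(1-\frac{\Prob(M_{N^{(n)}}>a_{n})}{\Prob(S_{N^{(n)}}>a_{n})}\bigr)\to 0$ under \eqref{eq:heavytailrandsum}. The only difference is that you spell out the steps the paper leaves implicit — the covariance bound from Section \ref{sec:mcmc:prob}, the inclusion $\{M_{N^{(n)}}>a_{n}\}\subseteq\{S_{N^{(n)}}>a_{n}\}$, and the reading of the denominator in \eqref{eq:heavytailrandsum} as $\Prob(M_{N^{(n)}}>a_{n})$ — all of which are correct.
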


\begin{remark}
Because the distribution of $N^{(n)}$ may depend on $n$ Theorem \ref{thm:randsum} covers a wider range of models for random sums than those studied in \cite{Dupuis2007, BL10} where the authors present provably efficient importance sampling algorithms.
\end{remark}

\begin{proof}
Since $p^{(n)} = \Prob(S_{N^{(n)}} > a_{n})$ and
\begin{align*}
	u^{(n)}(k,y_{1}, \dots, y_{k}) = \frac{I\{ \vee_{j=1}^{k} y_{j} > a_{n}\}}{\Prob(M_{N^{(n)}} > a_{n})},
\end{align*}
it follows that
\begin{align*}
	 &[p^{(n)}]^{2} \Var_{\pi_{n}}(u^{(n)}(\overline{\Ybold}^{(n)})) \\
	 & \quad = \frac{\Prob(S_{N^{(n)}}> a_{n})^{2}}{\Prob(M_{N^{(n)}} > a_{n})^{2}} \Var_{\pi_{n}}(I\{\vee_{j=1}^{N^{(n)}} Y_{j} > a_{n}\})\\
	 & \quad =  \frac{\Prob(S_{N^{(n)}}> a_{n})^{2}}{\Prob(M_{N^{(n)}} > a_{n})^{2}} \Prob(M_{N^{(n)}} > a_{n} \mid S_{N^{(n)}} > a_{n}) \Prob(M_{N^{(n)}} \leq a_{n} \mid S_{N^{(n)}} > a_{n}) \\
	 & \quad = \frac{\Prob(S_{N^{(n)}}> a_{n})}{\Prob(M_{N^{(n)}} > a_{n})} \Big(1-\frac{\Prob(M_{N^{(n)}}> a_{n})}{\Prob(S_{N^{(n)}} > a_{n})}\Big) \to 0,
\end{align*}
by \eqref{eq:heavytailrandsum}. This completes the proof.
\end{proof}

\section{Numerical experiments}
\label{sec:numeric}

In this section the performance of the estimator $\widehat{p}_{T}^{(n)} = (\widehat{q}^{(n)}_{T})^{-1}$ with $\widehat{q}^{(n)}_{T}$ as in \eqref{eq:rwest} is illustrated numerically. The literature includes numerical comparison  for many of the existing algorithms. In particular in the setting of random sums. Numerical results for the algorithms by Dupuis et al. \cite{Dupuis2007}, the hazard rate twisting algorithm by Juneja and Shahabuddin \cite{Juneja2002}, and the conditional Monte Carlo algorithm by Asmussen and Kroese \cite{Asmussen2006} can be found in \cite{Dupuis2007}. Additional numerical results for the algorithms by Blanchet and Li \cite{BL10}, Dupuis et al.\ \cite{Dupuis2007}, and  Asmussen and Kroese \cite{Asmussen2006} can be found in \cite{BL10}. From the existing results it appears as if the algorithm by Dupuis et al.\ \cite{Dupuis2007} has the best performance. Therefore, we only include numerical experiments of  the MCMC estimator and the estimator in \cite{Dupuis2007}, which is labelled (IS).

By construction each simulation run of the MCMC algorithm only generates a single random variable (one simulation step) while both importance sampling and standard Monte Carlo generate $n$ number of random variables ($n$ simulation steps) for the case of fixed number of steps ($N+1$ in the random number of steps case). Therefore the number runs for the MCMC is scaled up to get a fair comparison of the computer runtime between the three approaches.

First consider estimating $\Prob(S_n > a_n)$ where $S_n=Y_1+\cdots+Y_n$ with $Y_{1}$ having a Pareto distribution with density $f_Y(x) = \beta(x+1)^{-\beta-1}$ for $x \geq 0$. Let $a_n = a n$. Each estimate is calculated using $b$ number of batches, each consisting of $T$ simulations in the case of importance sampling and standard Monte Carlo and $T n$ in the case of MCMC. The batch sample mean and sample standard deviation is recorded as well as the average runtime per batch. The results are presented in Table \ref{tab:fixed1}. The convergence of the algorithms can also be visualized by considering the point estimate as a function of number of simulation steps. This is presented in Figure \ref{fig:conv_fix}. The MCMC algorithm appears to outperform the importance sampling algorithm consistently for different choices of the parameters. The improvement over importance sampling appears to increase as the event becomes more rare. This is due to the fact that the asymptotic approximation becomes better and better as the event becomes more rare.

Secondly consider estimating $\Prob(S_N > a_\rho)$ where $S_N=Y_1+\cdots+Y_N$ with $N$ Geometrically distributed $\Prob(N=k)=(1-\rho)^{k-1}\rho$ for $k=1,2,\ldots$ and $a_\rho = a \E[N] = a/\rho$. The estimator considered here is $\widehat{p}_{T} = (\widehat{q}_{T})^{-1}$ with $\widehat{q}_{T}$ as in \eqref{eq:estrandsum}.
Again, each estimate is calculated using $b$ number of batches, each consisting of $T$ simulations in the case of importance sampling and standard Monte Carlo and $T\E[N]$ in the case of MCMC. The results are presented in Table \ref{tab:random1}.
Also in the case of random number of steps the MCMC algorithm appears to outperform the importance sampling algorithm consistently for different choices of the parameters.

We remark that in our simulation with $\rho = 0.2$, $a = 5 \cdot 10^{9}$ the sample standard deviation of the MCMC estimate is zero. This is because we did not observe any indicators $I\{\vee_{j=1}^{n} y_{t,j} > a_{\rho}\}$ being equal to $0$ in this case.

\pagebreak

\bibliographystyle{plain}
\bibliography{references}


\begin{table}[!ht]
\caption{The table displays the batch mean and standard deviation of the estimates of $\Prob(S_n > a_n)$ as well as the average runtime per batch for time comparison. The number of batches run is $b$, each consisting of $T$ simulations for importance sampling (IS) and standard Monte Carlo (MC) and $T  n$ simulations for Markov chain Monte Carlo (MCMC). The asymptotic approximation is $p_{\text{max}} = \Prob(\max\{Y_1,\ldots,Y_n\} > a_n)$.} \label{tab:fixed1}
\begin{tabular}{|c|c|c|c|}
\hline \multicolumn{4}{|c|}{$b=25$, $T=10^5$, $\beta=2$, $n=5$, $a=5$, $p_\text{max}=\text{0.737e-2}$} \\
\hline {} & {MCMC} & {IS} & {MC} \\
\hline {Avg. est.} & {1.050e-2} & {1.048e-2} & {1.053e-2} \\
{Std. dev.} & {3e-5} & {9e-5} & {27e-5} \\
{Avg. time per batch(s)} & {12.8} & {12.7} & {1.4} \\
\hline \multicolumn{4}{|c|}{$b=25$, $T=10^5$, $\beta=2$, $n=5$, $a=20$, $p_\text{max}=\text{4.901e-4}$} \\
\hline {} & {MCMC} & {IS} & {MC} \\
\hline {Avg. est.} & {5.340e-4} & {5.343e-4} & {5.380e-4} \\
{Std. dev.} & {6e-7} & {13e-7} & {770e-7} \\
{Avg. time per batch(s)} & {14.4} & {13.9} & {1.5} \\
\hline \multicolumn{4}{|c|}{$b=20$, $T=10^5$, $\beta=2$, $n=5$, $a=10^3$, $p_\text{max}=\text{1.9992e-7}$} \\
\hline {} & {MCMC} & {IS} & {} \\
\hline {Avg. est.} & {2.0024e-7} & {2.0027e-7} & {} \\
{Std. dev.} & {3e-11} & {20e-11} & {} \\
{Avg. time per batch(s)} & {15.9} & {15.9} & {} \\
\hline \multicolumn{4}{|c|}{$b=20$, $T=10^5$, $\beta=2$, $n=5$, $a=10^4$, $p_\text{max}=\text{1.99992e-9}$} \\
\hline {} & {MCMC} & {IS} & {} \\
\hline {Avg. est.} & {2.00025e-9} & {2.00091e-9} & {} \\
{Std. dev.} & {7e-14} & {215e-14} & {} \\
{Avg. time per batch(s)} & {15.9} & {15.9} & {} \\
\hline \multicolumn{4}{|c|}{$b=25$, $T=10^5$, $\beta=2$, $n=20$, $a=20$, $p_\text{max}=\text{1.2437e-4}$} \\
\hline {} & {MCMC} & {IS} & {MC} \\
\hline {Avg. est.} & {1.375e-4} & {1.374e-4} & {1.444e-4} \\
{Std. dev.} & {2e-7} & {3e-7} & {492e-7} \\
{Avg. time per batch(s)} & {52.8} & {50.0} & {2.0} \\
\hline \multicolumn{4}{|c|}{$b=25$, $T=10^5$, $\beta=2$, $n=20$, $a=200$, $p_\text{max}=\text{1.2494e-6}$} \\
\hline {} & {MCMC} & {IS} & {MC} \\
\hline {Avg. est.} & {1.2614e-6} & {1.2615e-6} & {1.2000e-6} \\
{Std. dev.} & {4e-10} & {12e-10} & {33,166e-10} \\
{Avg. time per batch(s)} & {49.4} & {48.4} & {1.9} \\
\hline \multicolumn{4}{|c|}{$b=20$, $T=10^5$, $\beta=2$, $n=20$, $a=10^3$, $p_\text{max}=\text{4.9995e-8}$} \\
\hline {} & {MCMC} & {IS} & {} \\
\hline {Avg. est.} & {5.0091e-8} & {5.0079e-8} & {} \\
{Std. dev.} & {7e-12} & {66e-12} & {} \\
{Avg. time per batch(s)} & {53.0} & {50.6} & {} \\
\hline \multicolumn{4}{|c|}{$b=20$, $T=10^5$, $\beta=2$, $n=20$, $a=10^4$, $p_\text{max}=\text{5.0000e-10}$} \\
\hline {} & {MCMC} & {IS} & {} \\
\hline {Avg. est.} & {5.0010e-10} & {5.0006e-10} & {} \\
{Std. dev.} & {2e-14} & {71e-14} & {} \\
{Avg. time per batch(s)} & {48.0} & {47.1} & {} \\
\hline
\end{tabular}
\end{table}

\begin{table}[!ht]
\caption{The table displays the batch mean and standard deviation of the estimates of $\Prob(S_N > a_\rho)$ as well as the average runtime per batch for time comparison. The number of batches run is $b$, each consisting of $T$ simulations for importance sampling (IS) and standard Monte Carlo (MC) and $T \,\E[N]$ simulations for Markov chain Monte Carlo (MCMC). The asymptotic approximation is $p_{\text{max}} = \Prob(\max\{Y_1,\ldots,Y_N\} > a_\rho)$.} \label{tab:random1}
\begin{tabular}{|c|c|c|c|}
\hline \multicolumn{4}{|c|}{$b=25$, $T=10^5$, $\beta=1$, $\rho=0.2$, $a=10^2$, $p_\text{max}=\text{0.990e-2}$} \\
\hline {} & {MCMC} & {IS} & {MC} \\
\hline {Avg. est.} & {1.149e-2} & {1.087e-2} & {1.089e-2} \\
{Std. dev.} & {4e-5} & {6e-5} & {35e-5} \\
{Avg. time per batch(s)} & {25.0} & {11.0} & {1.2} \\
\hline \multicolumn{4}{|c|}{$b=25$, $T=10^5$, $\beta=1$, $\rho=0.2$, $a=10^3$, $p_\text{max}=\text{0.999e-3}$} \\
\hline {} & {MCMC} & {IS} & {MC} \\
\hline {Avg. est.} & {1.019e-3} & {1.012e-3} & {1.037e-3} \\
{Std. dev.} & {1e-6} & {3e-6} & {76e-6} \\
{Avg. time per batch(s)} & {25.8} & {11.1} & {1.2} \\
\hline \multicolumn{4}{|c|}{$b=20$, $T=10^6$, $\beta=1$, $\rho=0.2$, $a=5 \cdot 10^7$, $p_\text{max}=\text{2.000000e-8}$} \\
\hline {} & {MCMC} & {IS} & {} \\
\hline {Avg. est.} & {2.000003e-8} & {1.999325e-8} & {} \\
{Std. dev.} & {6e-14} & {1114e-14} & {} \\
{Avg. time per batch(s)} & {385.3} & {139.9} & {} \\
\hline \multicolumn{4}{|c|}{$b=20$, $T=10^6$, $\beta=1$, $\rho=0.2$, $a=5 \cdot 10^9$, $p_\text{max}=\text{2.0000e-10}$} \\
\hline {} & {MCMC} & {IS} & {} \\
\hline {Avg. est.} & {2.0000e-10} & {1.9998e-10} & {} \\
{Std. dev.} & {0} & {13e-14} & {} \\
{Avg. time per batch(s)} & {358.7} & {130.9} & {} \\
\hline \multicolumn{4}{|c|}{$b=25$, $T=10^5$, $\beta=1$, $\rho=0.05$, $a=10^3$, $p_\text{max}=\text{0.999e-3}$} \\
\hline {} & {MCMC} & {IS} & {MC} \\
\hline {Avg. est.} & {1.027e-3} & {1.017e-3} & {1.045e-3} \\
{Std. dev.} & {1e-6} & {4e-6} & {105e-6} \\
{Avg. time per batch(s)} & {61.5} & {44.8} & {1.3} \\
\hline \multicolumn{4}{|c|}{$b=25$, $T=10^5$, $\beta=1$, $\rho=0.05$, $a=5\cdot 10^5$, $p_\text{max}=\text{1.9999e-6}$} \\
\hline {} & {MCMC} & {IS} & {MC} \\
\hline {Avg. est.} & {2.0002e-6} & {2.0005e-6} & {3.2000e-6} \\
{Std. dev.} & {1e-10} & {53e-10} & {55,678e-10} \\
{Avg. time per batch(s)} & {60.7} & {45.0} & {1.3} \\
\hline
\end{tabular}
\end{table}

\pagebreak
\begin{figure}[!ht]
    \centering
        \includegraphics[width=1.0\textwidth]{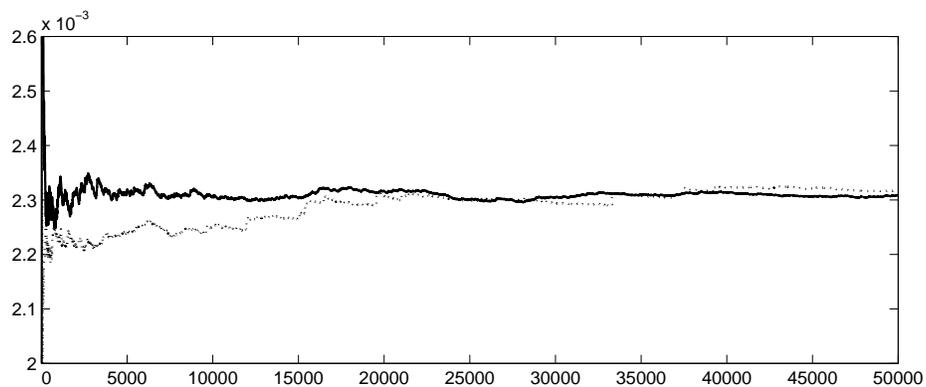}
            \caption{The figure illustrates the point estimate of $\Prob(S_n > a_n)$ as a function of the number of simulation steps, with $n=5$, $a=10$, $\beta=2$. The estimate generated via the MCMC approach is drawn by a \emph{solid line} and the estimate generated via IS is drawn by a \emph{dotted line}.} \label{fig:conv_fix}
\end{figure}


\end{document}